\documentclass[11pt]{article}

\usepackage{amsmath,amssymb,amsthm,bm,cite,color,paralist,graphicx,subcaption,float}

\textwidth=150mm
\textheight=230mm
\headheight=12pt
\headsep=1mm
\footskip=8mm
\evensidemargin=-10mm
\oddsidemargin=5mm
\topmargin=-10.5mm

\theoremstyle{plain}
\newtheorem{thm}{Theorem}
\newtheorem{lem}[thm]{Lemma}

\newtheorem{prob}[thm]{Problem}


\def\BA{\mathbb A}
\def\BB{\mathbb B}
\def\BC{\mathbb C}
\def\BN{\mathbb N}
\def\BR{\mathbb R}
\def\cA{\mathcal A}

\def\T{\mathrm T}

\def\rd{\mathrm d}

\def\rdiv{\mathrm{div}}

\def\op{\mathrm{op}}

\def\Ga{\Gamma}

\def\Om{\Omega}
\def\al{\alpha}
\def\be{\beta}
\def\ga{\gamma}

\def\ka{\kappa}
\def\la{\lambda}

\def\vp{\varphi}

\def\f{\frac}
\def\nb{\nabla}
\def\ov{\overline}
\def\pa{\partial}


\allowdisplaybreaks[4]

\title{Forward and Backward Problems for Coupled Subdiffusion Systems}

\author{Dian Feng\thanks{School of Mathematical Sciences, Fudan University, 220 Handan Road, Shanghai 200433, China. E-mail: {\tt dfeng19@fudan.edu.cn}}\quad
Yikan Liu\thanks{Corresponding author. Department of Mathematics, Kyoto University,
Kitashirakawa-Oiwakecho, Sakyo-ku, Kyoto 606-8502, Japan.
E-mail: {\tt liu.yikan.8z@kyoto-u.ac.jp}}\quad
Shuai Lu\thanks{School of Mathematical Sciences, Fudan University, 220 Handan Road, Shanghai 200433, China. E-mail: {\tt slu@fudan.edu.cn}}}

\date{\footnotesize Dedicated to Bernd Hofmann on the occasion of his 70th birthday.}

\begin{document}

\maketitle

\baselineskip 18pt

\begin{abstract}
In this article, we investigate both forward and backward problems for coupled systems of time-fractional diffusion equations, encompassing scenarios of strong coupling. For the forward problem, we establish the well-posedness of the system, leveraging the eigensystem of the corresponding elliptic system as the foundation. When considering the backward problem, specifically the determination of initial values through final time observations, we demonstrate a Lipschitz stability estimate, which is consistent with the stability observed in the case of a single equation. To numerically address this backward problem, we refer to the explicit formulation of Tikhonov regularization to devise a multi-channel neural network architecture. This innovative architecture offers a versatile approach, exhibiting its efficacy in multidimensional settings through numerical examples and its robustness in handling initial values that have not been trained.
\medskip

\noindent{\bf Keywords:} Subdiffusion equation, coupled system, backward problem, neural network\medskip

\noindent{\bf Mathematics Subject Classification:} 35R11, 35K58, 35B44
\end{abstract}

\section{Introduction}

The last decade has witnessed explosive developments of nonlocal models from various practical backgrounds. As a popular representative, partial differential equations with fractional derivatives in time have gathered considerable popularity in modeling a diverse array of significant diffusion phenomena, represented by prominent examples such as subdiffusion in underground environment with high heterogeneity \cite{HH} and relaxation phenomena in complex viscoelastic materials \cite{GCR}. Recently, linear theory for fractional diffusion equations with fractional time derivatives ranging in $(0,1)$ has been well established, followed by extensive researches on related numerical methods and inverse problems (see e.g.\! \cite{SY11,KRY20,JZ23,KR23}, just list a few). Nevertheless, there seems much less literature investigating coupled systems of subdiffusion equations. This motivates the generalizations of the studies on time-fractional diffusion equations from single equations to coupled systems.

In our current work, we propose and examine forward and backward problems for coupled systems of time-fractional diffusion equations. To formulate the problem, we first define the time-fractional differential operator. Let $\al\in(0,1)$, $T>0$ be constants and $\rd_t^\al:L^2(0,T)\longrightarrow L^2(0,T)$ be the primitive $\al$-th order Caputo differential operator defined by (e.g., Podlubny \cite{P99})
\[
D(\rd_t^\al)=\{f\in C^1[0,T]\mid f(0)=0\},\quad\rd_t^\al f(t):=\int_0^t\f{(t-s)^{-\al}}{\Ga(1-\al)}\f{\rd f}{\rd s}(s)\,\rd s,
\]
where $D(\,\cdot\,)$ and $\Ga(\,\cdot\,)$ denote the domain of an operator and the Gamma function, respectively. By $\pa_t^\al:L^2(0,T)\longrightarrow L^2(0,T)$ we denote the smallest closed extension of $\rd_t^\al$. Then it follows from \cite[Theorem 2.5]{KRY20} that $\pa_t^\al$ is a natural generalization of $\rd_t^\al$ in the Sobolev-Slobodeckij space
\[
H^\al(0,T):=\left\{f\in L^2(0,T)\mid\|f\|_{H^\al(0,T)}:=\left(\int_0^T\!\!\!\int_0^T\f{|f(t)-f(s)|^2}{|t-s|^{1+2\al}}\,\rd t\rd s\right)^{1/2}<\infty\right\}.
\]
More precisely, we know
\begin{equation}\label{eq-def-Hal}
D(\pa_t^\al)=H_\al(0,T):=\left\{\!\begin{alignedat}{2}
& H^\al(0,T), & \quad & 0<\al<1/2,\\
& \left\{f\in H^{1/2}(0,T)\mid\int_0^T\f{|f(t)|^2}t\,\rd t<\infty\right\}, & \quad & \al=1/2,\\
& \{f\in H^\al(0,T)\mid f(0)=0\}, & \quad & 1/2<\al\le1
\end{alignedat}\right.
\end{equation}
and $\pa_t^\al$ is an isomorphism between $H_\al(0,T)$ and $L^2(0,T)$. Furthermore, $H_\al(0,T)$ is a Banach space equipped with the norm
\[
\|f\|_{H_\al(0,T)}:=\left\{\!\begin{alignedat}{2}
& \|f\|_{H^\al(0,T)}, & \quad & \al\ne1/2,\\
& \left(\|f\|_{H^{1/2}(0,T)}^2+\int_0^T\f{|f(t)|^2}t\,\rd t\right)^{1/2}, & \quad & \al=1/2.
\end{alignedat}\right.
\]

With the above defined fractional differential operator $\pa_t^\al$, we formulate the initial-boundary value problems under consideration. Let $\Om\subset\BR^d$ ($d\in\BN:=\{1,2,\dots\}$) be a bounded domain with a smooth boundary $\pa\Om$. In this article, we investigate both
\begin{equation}\label{eq-IBVP1}
\left\{\!\begin{alignedat}{2}
& \pa_t^\al\left(u_k-u_0^{(k)}\right)-\rdiv(\bm A_k(\bm x)\nb u_k)+\sum_{\ell=1}^K c_{k\ell}(\bm x)u_\ell=F_k & \quad & \mbox{in }\Om\times(0,T),\\
& u_k=0 & \quad & \mbox{on }\pa\Om\times(0,T)
\end{alignedat}\right.
\end{equation}
with $k=1,\dots,K$ for a constant $K\in\BN$ and
\begin{equation}\label{eq-IBVP2}
\begin{cases}
\pa_t^\al(\bm u-\bm u_0)-\rdiv(\BA(\bm x)\nb\bm u)+\bm C(\bm x)\bm u=\bm F & \mbox{in }\Om\times(0,T),\\
\bm u=\bm0 & \mbox{on }\pa\Om\times(0,T).
\end{cases}
\end{equation}
In \eqref{eq-IBVP1}, it is assumed that each $\bm A_k$ is a strictly positive definite matrix-valued function on $\ov\Om$ of $C^1$ class and $\bm C:=(c_{k\ell})_{1\le k,\ell\le K}$ is a non-negative definite matrix-valued function in $\Om$ of $L^\infty$ class. More precisely, there exists a constant $\ka>0$ such that
\begin{equation}\label{eq-cond-Ak}
 \bm A_k\in C^1(\ov\Om;\BR^{d\times d}),\quad\bm A_k(\bm x)=(\bm A_k(\bm x))^\T,\quad\bm A_k\bm\xi\cdot\bm\xi\ge\ka|\bm\xi|^2
\end{equation}
for any $\bm\xi\in\BR^d$, $\bm x\in\ov\Om$, $k=1,\dots,K$ and
\begin{equation}\label{eq-cond-C}
\bm C=(c_{k\ell})_{1\le k,\ell\le K}\in L^\infty(\Om;\BR^{K\times K}),\quad\bm C(\bm x)=(\bm C(\bm x))^\T,\quad\bm C\bm\xi\cdot\bm\xi\ge0
\end{equation}
for any $\bm\xi\in\BR^d$ and a.e.\! $\bm x\in\Om$, where $(\,\cdot\,)^\T$ and $|\cdot|^2$ denote the transpose and the Euclidean distance, respectively. On the other hand, $u_0^{(k)}$ stands for the initial value of $u_k$ ($k=1,\dots,K$) in the sense that $u_k(\bm x,\,\cdot\,)-u_0^{(k)}(\bm x)$ lies in the domain $H_\al(0,T)$ of $\pa_t^\al$ for a.e.\! $\bm x\in\Om$, which is easily understood in view of \eqref{eq-def-Hal} with $\al>1/2$. In the sequel, we always abbreviate
\[
\bm u:=(u_1,\dots,u_K)^\T,\quad\bm u_0:=(u_0^{(1)},\dots,u_0^{(K)})^\T,\quad\bm F:=(F_1,\dots,F_K)^\T.
\]

In \eqref{eq-IBVP2}, we similarly denote $\bm u_0=(u_0^{(1)},\dots,u_0^{(d)})^\T$ as the initial value of $\bm u=(u_1,\dots,u_d)^\T$ as explained above. Meanwhile, we assume that $\bm C$ satisfies \eqref{eq-cond-C} with $K=d$, and $\BA:=(a_{ijk\ell})_{1\le i,j,k,\ell\le d}\in C^1(\ov\Om;\BR^{d\times d\times d\times d})$ is a fully symmetric fourth order tensor on $\ov\Om$ satisfying the stability condition (see \cite{LQ12}), that is,
\begin{equation}\label{eq-fullsym}
a_{ijk\ell}=a_{jik\ell}=a_{k\ell ij},\quad\forall\,i,j,k,\ell=1,\dots,d\mbox{ on }\ov\Om
\end{equation}
and there exists a constant $\ka>0$ such that
\begin{equation}\label{eq-stabcond}
 \sum_{i,j,k\ell=1}^d a_{ijk\ell}(\bm x)e_{ij}e_{k\ell}\ge\ka\sum_{i,j=1}^d e_{ij}^2
\end{equation}
for any $\bm x\in\ov\Om$ and symmetric matrix $(e_{ij})_{1\le i,j\le d}\in\BR^{d\times d}$. Both $\nb\bm u$ and $\BA(\bm x)\nb\bm u$ are matrix-valued functions of $d\times d$, whose entries are
\[
(\nb\bm u)_{k\ell}=\pa_\ell u_k\ (1\le k,\ell\le d),\quad (\BA(\bm x)\nb\bm u)_{ij}=\sum_{k,\ell=1}^d a_{ijk\ell}(\bm x)\pa_\ell u_k\ (1\le i,j\le d),
\]
respectively. Finally, $\rdiv(\BA(\bm x)\nb\bm u)$ is a column vector with
\[
(\rdiv(\BA(\bm x)\nb\bm u))_i=\sum_{j=1}^d\pa_j(\BA(\bm x)\nb\bm u)_{ij}\quad(1\le i\le d).
\]

Both \eqref{eq-IBVP1} and \eqref{eq-IBVP2} require the coincidence of the orders of time derivatives in all components, in which \eqref{eq-IBVP1} is a weakly coupled system. In view of the generality, the formulation \eqref{eq-IBVP1} is indeed a special case of that discussed in \cite{LHL23}, where the orders of time derivatives are allowed to be different among components. In contrast, \eqref{eq-IBVP2} restricts $K=d$ but allows strong coupling up to the second order derivatives in space. Especially, the second order term $-\rdiv(\BA(\bm x)\nb\bm u)$ appears as a standard formulation in hyperbolic systems modeling the elasticity for $d=2,3$. Therefore, the formulation \eqref{eq-IBVP2} is not only parallel to that in \cite{LHL23}, but also serves as a preliminary research of time-fractional wave equation  with $1<\al<2$ modeling the viscoelastic materials (e.g. \cite{BDES18, KR22}).

In this article, our first objective is to establish the well-posedness of the problems defined in equations \eqref{eq-IBVP1} and \eqref{eq-IBVP2}. As the second subject of the current work, we investigate the following inverse problem.

\begin{prob}[Backward problem]\label{prob1}
Let $T>0$ be arbitrary finite time and $\bm u$ satisfy \eqref{eq-IBVP1} or \eqref{eq-IBVP2} with $\bm F\equiv\bm0$. Determine the initial value $\bm u_0$ by the final observation of $\bm u$ at $t=T$.
\end{prob}

In Problem \ref{prob1}, we attempt to determine the initial values of all components in \eqref{eq-IBVP1} or \eqref{eq-IBVP2} simultaneously by observing the state of all components at the same final moment $T$. As possible generalizations, it seems interesting to study the same problem with the observation taken at different moments $T_k$ for the component $u_k$. Moreover, owing to the coupling effect between components, one can also study the possibility of solving Problem \ref{prob1} with a part or even one of the observed components. However, such issues require further investigations and for the moment we restrict ourselves to the framework of Problem \ref{prob1}.

It is  worth mentioning that as one of the most typical inverse problems, there has been considerable research on backward problems for single subdiffusion equation, initiated from the pioneering works of Liu and Yamamoto \cite{LY10}. We further refer to the papers \cite{RXL14,FLY20,FY20} as well as a comprehensive survey \cite[\S5.2]{LY19} especially highlighting the numerical studies prior to the year of 2019. Recently, \cite{LY23} obtained the uniqueness of the backward problem for a multi-term subdiffusion equation by the short-time behavior of the solution. Meanwhile, \cite{CMY24} established a conditional H\"older stability for a backward problem associated with the fractional evolution equation $\pa_t^\al u+A u=0$ with a general self-adjoint operator $A$. On the other hand, inverse problems for coupled systems of subdiffusion equations have also started to gain certain attention, including the studies in \cite{RHY21,LHL23}. However, to the authors' best knowledge, there seems no existing investigation into the backward problem specifically for the strongly coupled case. This area remains unexplored and provides opportunities for further research.

In the numerical aspect, although we can solve \eqref{eq-IBVP1},  \eqref{eq-IBVP2} and their corresponding backward problems using classical numerical schemes and regularization methods, these traditional approaches are limited to solving specific instances of the equations. In this article, we aim at developing a generic neural network architecture that learns the fundamental characteristics of backward problems for subdiffusion process.

Over the past few years, a wide range of neural network architectures have been developed to solve partial differential equations and their inverse problems, including the deep Ritz method \cite{B18}, the deep Galerkin method \cite{SS18}, physics-informed neural networks (PINNs) \cite{RP19}, weak adversarial networks \cite{ZB20} and others \cite{KL21,LL20,LL18}. However, applying automatic differentiation techniques from machine learning to fractional differential equations is more complicated than applying them to conventional derivatives. Indeed, fractional orders are inherently defined via integral representations, requiring additional methods within neural networks to accurately handle fractional derivatives. These approaches often involve techniques such as Monte Carlo sampling or finite difference approximations. While several studies have explored this field, for instance, the work presented in \cite{PL19,GW22}, it is noteworthy that most existing results focused on the single equation, with limited exploration of coupled systems. The neural network approach introduced in this paper does not attempt to directly solve the backward problem using traditional numerical methods. Instead, its objective is to learn the complex interdependencies and correlations between diffusion components at different time within coupled systems.

From the theoretical aspect, there is no essential difference between our formulation of coupled systems with single equations in literature due to the self-adjoint structure on the spatial direction, so that classical methods such as eigenfunction expansions still work. Nevertheless, such verification needs some advanced tools, like Korn's inequality or m-accretive operators as demonstrated in Appendix \ref{sec-app}. Meanwhile, the main novelty of this article lies in the application of neural network to backward problems, which has never been achieved for fractional equations before. Moreover, we implemented the reconstruction for coupled systems with variable diffusion coefficients, while almost all literature only involved a single Laplacian for single equations.

This article will be structured as follows. In Section \ref{sec-main}, we will outline the main theoretical contributions and their respective proofs. Specifically, we will establish the well-posedness for the solution of coupled systems and derive the stability estimate for the backward problem (Problem \ref{prob1}). Section \ref{sec-numer} will focus on the numerical investigation of the backward problem.
We will develop a neural network inversion algorithm based on the concept of conventional Tikhonov regularization. By generating a training data set using classical numerical methods, we will evaluate the accuracy and generalization capabilities of this algorithm. Finally, we close this article with an appendix providing a detailed discussion on the existence of an eigensystem of the elliptic part $\cA$ in our formulation along with its fractional powers.

\section{Main Results}\label{sec-main}

In this section, we provide the statements of main results in this article along with their proofs. To this end, we start with some preparations. First we recall the frequently used Mittag-Leffler function
\[
E_{\al,\be}(z):=\sum_{m=0}^\infty\f{z^m}{\Ga(\al m+\be)},\quad\be\in\BR,\ z\in\BC.
\]

Next, let $(L^2(\Om))^K$ be the product space of $L^2(\Om)$, which is a Hilbert space with the inner product
\[
(\bm f,\bm g):=\int_\Om\bm f\cdot\bm g\,\rd\bm x=\sum_{k=1}^K\int_\Om f_k g_k\,\rd\bm x
\]
for $\bm f=(f_1,\dots,f_K)^\T,\bm g=(g_1,\dots,g_K)^\T\in(L^2(\Om))^K$. Then the norm of $(L^2(\Om))^K$ is induced as $\|\bm f\|_{(L^2(\Om))^K}:=(\bm f,\bm f)^{1/2}$, which is abbreviated as $\|\bm f\|_{L^2(\Om)}$ if there is no fear of confusion. It turns out that initial-boundary value problems \eqref{eq-IBVP1} and \eqref{eq-IBVP2}, though complicated, admit a unified formulation. In fact, for \eqref{eq-IBVP1} we introduce
\begin{gather*}
D(\cA_k):=H^2(\Om)\cap H_0^1(\Om),\quad\cA_k:D(\cA_k)\longrightarrow L^2(\Om),\\
\cA_k f:=-\rdiv(\bm A_k(\bm x)\nb f)\quad(k=1,\dots,K),\\
D(\cA):=(H^2(\Om)\cap H_0^1(\Om))^K,\quad\cA:=\mathrm{diag}(\cA_1,\dots,\cA_K)+\bm C:D(\cA)\longrightarrow(L^2(\Om))^K.
\end{gather*}
Then it is readily seen that \eqref{eq-IBVP1} can be rewritten as
\begin{equation}\label{eq-IBVP0}
\begin{cases}
\pa_t^\al(\bm u-\bm u_0)+\cA\bm u=\bm F & \mbox{in }\Om\times(0,T),\\
\bm u=\bm 0 & \mbox{on }\pa\Om\times(0,T).
\end{cases}
\end{equation}
For \eqref{eq-IBVP2}, we immediately arrive at \eqref{eq-IBVP0} by simply putting
\[
\cA\bm u:=-\rdiv(\BA(\bm x)\nb\bm u)+\bm C(\bm x)\bm u.
\]
The common representation \eqref{eq-IBVP0} not only provides convenience for discussing \eqref{eq-IBVP1} and \eqref{eq-IBVP2} simultaneously, but also enables a similar treatment for coupled systems as that for a single equation.

Actually, in both cases of \eqref{eq-IBVP1} and \eqref{eq-IBVP2}, there exists an eigensystem $\{(\la_n,\bm\vp_n)\}_{n=1}^\infty$ of $\cA$ such that
\[
\begin{cases}
\cA\bm\vp_n=\la_n\bm\vp_n & \mbox{in }\Om,\\
\bm\vp_n=\bm0 & \mbox{on }\pa\Om
\end{cases}\ (n=1,2,\dots),\quad0<\la_1\le\la_2\le\cdots,\ \la_n\longrightarrow\infty\ (n\to\infty),
\]
and $\{\bm\vp_n\}\subset D(\cA)$ forms a complete orthonormal system of $(L^2(\Om))^K$. Furthermore, one can define the fractional power $\cA^\ga$ of $\cA$ along with its domain $D(\cA^\ga)$ for $\ga\in(0,1)$ by
\begin{gather}
D(\cA^\ga):=\left\{\bm f\in(L^2(\Om))^K\mid\|\bm f\|_{D(\cA^\ga)}:=\|\cA^\ga\bm f\|_{L^2(\Om)}<\infty\right\},\label{eq-domain-fp}\\
\cA^\ga\bm f:=\sum_{n=1}^\infty\la_n^\ga(\bm f,\bm\vp_n)\bm\vp_n.\label{eq-fracpow}
\end{gather}
We know that $D(\cA^\ga)$ is a Hilbert space and especially $D(\cA^{1/2})=(H_0^1(\Om))^K$. By $H^{-1}(\Om)$ we denote the dual of $H_0^1(\Om)$ in the topology of $L^2(\Om)$.

The existence of the eigensystem $\{(\la_n,\bm\vp_n)\}$ and fractional powers of $\cA$ follows from standard theories for eigenvalue problems and fractional powers of m-accretive operators, which can be found e.g.\! in \cite[\S6.5]{E10}, \cite[\S2.6]{P83} and \cite[\S3]{T79}. Nevertheless, for the sake of self-containedness, we provide detailed verification in Appendix \ref{sec-app}.

As long as the eigensystem of $\cA$ is available, we can follow the same line of \cite{SY11} to establish the basic well-posedness results for \eqref{eq-IBVP0}.

\begin{thm}\label{thm-FP}
Let $\bm u_0\in(L^2(\Om))^K,$ $\bm F\in L^2(0,T;(L^2(\Om))^K)$ and choose any $\ga\in[0,1]$.
\begin{enumerate}
\item If $\bm F\equiv\bm0,$ then the initial-boundary value problem \eqref{eq-IBVP0} admits a unique solution $\bm u\in L^{1/\ga}(0,T;D(\cA^\ga)),$ where $1/\ga=\infty$ for $\ga=0$. Furthermore, $\bm u$ satisfies
\[
\lim_{t\to0+}\|\bm u-\bm u_0\|_{L^2(\Om)}=0,\quad\bm u-\bm u_0\in H_\al(0,T;(H^{-1}(\Om))^K)
\]
and allows the representation
\begin{equation}\label{eq-solrep1}
\bm u(\,\cdot\,,t)=\sum_{n=1}^\infty E_{\al,1}(-\la_n t^\al)(\bm u_0,\bm\vp_n)\bm\vp_n\quad\mbox{in }\bigcap_{0\le\ga\le1}L^{1/\ga}(0,T;D(\cA^\ga)).
\end{equation}
Moreover, there exists a constant $C>0$ depending only on $\Om,\al,\cA$ such that
\begin{gather}
\|\bm u(\,\cdot\,,t)\|_{D(\cA^\ga)}\le C\|\bm u_0\|_{L^2(\Om)}t^{-\al\ga},\quad0<t<T,\label{eq-solest1}\\
\|\bm u\|_{L^{1/\ga}(0,T;D(\cA^\ga))}\le C\left(\f{T^{1-\al}}{1-\al}\right)^\ga\|\bm u_0\|_{L^2(\Om)},\label{eq-solest2}\\
\|\bm u-\bm u_0\|_{H_\al(0,T;(H^{-1}(\Om))^K)}\le C\left(\f{T^{1-\al}}{1-\al}\right)^{1/2}\|\bm u_0\|_{L^2(\Om)}.\nonumber
\end{gather}
\item If $\bm u_0\equiv\bm0,$ then the initial-boundary value problem \eqref{eq-IBVP0} admits a unique solution $\bm u\in L^2(0,T;D(\cA))\cap H_\al(0,T;(L^2(\Om))^K),$ which allows the representation
\begin{equation}\label{eq-solrep2}
\bm u(\,\cdot\,,t)=\sum_{n=1}^\infty\left(\int_0^t s^{\al-1}E_{\al,\al}(-\la_n s^\al)(\bm F(\,\cdot\,,t-s),\bm\vp_n)\,\rd s\right)\bm\vp_n
\end{equation}
in $L^2(0,T;D(\cA))\cap H_\al(0,T;(L^2(\Om))^K)$. Moreover, there exists a constant $C_T$ depending only on $\Om,\al,\cA$ and $T$ such that
\begin{equation}\label{eq-solest4}
\|\bm u\|_{L^2(0,T;D(\cA))}+\|\bm u\|_{H_\al(0,T;(L^2(\Om))^K)}\le C_T\|\bm F\|_{L^2(0,T;(L^2(\Om))^K)}.
\end{equation}
\end{enumerate}
\end{thm}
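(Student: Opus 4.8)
The plan is to mirror the classical eigenfunction-expansion argument of \cite{SY11} for single subdiffusion equations, now carried out in the product space $(L^2(\Om))^K$ with the operator $\cA$ replacing the scalar elliptic operator. Since the excerpt already grants us the eigensystem $\{(\la_n,\bm\vp_n)\}$, its completeness and orthonormality in $(L^2(\Om))^K$, the fractional powers $\cA^\ga$ via \eqref{eq-domain-fp}--\eqref{eq-fracpow}, and the isomorphism property of $\pa_t^\al$ between $H_\al(0,T)$ and $L^2(0,T)$, the proof becomes essentially formal. First, for part (i) with $\bm F\equiv\bm0$, I would plug the ansatz $\bm u(\cdot,t)=\sum_n u_n(t)(\bm u_0,\bm\vp_n)\bm\vp_n$ into \eqref{eq-IBVP0}, project onto each $\bm\vp_n$, and reduce to the scalar fractional ODE $\pa_t^\al(u_n-1)+\la_n u_n=0$, whose solution is $u_n(t)=E_{\al,1}(-\la_n t^\al)$; this yields the representation \eqref{eq-solrep1}. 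Then I would verify convergence in each $L^{1/\ga}(0,T;D(\cA^\ga))$ using the standard decay bound $|E_{\al,1}(-\la_n t^\al)|\le C/(1+\la_n t^\al)$, which gives $\la_n^\ga|E_{\al,1}(-\la_n t^\al)|\le C t^{-\al\ga}$ and hence, after summing the orthonormal series and invoking Parseval, the pointwise estimate \eqref{eq-solest1}; integrating $t^{-\al\ga}$ in the $L^{1/\ga}$ norm over $(0,T)$ produces the factor $(T^{1-\al}/(1-\al))^\ga$ in \eqref{eq-solest2}.

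For the regularity statements $\lim_{t\to0+}\|\bm u-\bm u_0\|_{L^2(\Om)}=0$ and $\bm u-\bm u_0\in H_\al(0,T;(H^{-1}(\Om))^K)$, I would argue termwise: each coefficient $E_{\al,1}(-\la_n t^\al)-1$ vanishes as $t\to0+$ and lies in $H_\al(0,T)$ because $\pa_t^\al$ maps it to $-\la_n E_{\al,1}(-\la_n t^\al)$, which is in $L^2(0,T)$; the $(H^{-1}(\Om))^K$-valued bound comes from the duality $\|\bm\vp_n\|_{(H^{-1}(\Om))^K}\le C\la_n^{-1/2}$ (since $D(\cA^{1/2})=(H_0^1(\Om))^K$), so $\|\pa_t^\al(\bm u-\bm u_0)\|_{L^2(0,T;(H^{-1}(\Om))^K)}^2\lesssim\sum_n\la_n^{-1}\la_n^2|(\bm u_0,\bm\vp_n)|^2\int_0^T E_{\al,\al}$-type bounds, ultimately controlled by $\|\bm u_0\|_{L^2(\Om)}^2$ times $T^{1-\al}/(1-\al)$, and combining with the $L^2(0,T)$-norm of $\bm u-\bm u_0$ via the equivalence of $H_\al$- and graph-norms of $\pa_t^\al$. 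For part (ii) with $\bm u_0\equiv\bm0$, I would use Duhamel's principle: project $\bm F$ onto $\bm\vp_n$, solve $\pa_t^\al u_n+\la_n u_n=F_n$ with $u_n(0)=0$ to get the convolution $u_n(t)=\int_0^t s^{\al-1}E_{\al,\al}(-\la_n s^\al)F_n(t-s)\,\rd s$, yielding \eqref{eq-solrep2}, and then apply the fractional Young/convolution inequality together with the bound $\int_0^T s^{\al-1}|E_{\al,\al}(-\la_n s^\al)|\,\rd s\le C/\la_n$ (uniform in $n$) to estimate both $\|\bm u\|_{L^2(0,T;D(\cA))}$ and $\|\pa_t^\al\bm u\|_{L^2(0,T;(L^2(\Om))^K)}$ by $\|\bm F\|_{L^2(0,T;(L^2(\Om))^K)}$, producing the constant $C_T$ in \eqref{eq-solest4}.

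Uniqueness in both parts follows the usual route: if $\bm u$ solves \eqref{eq-IBVP0} with zero data, then projecting onto each $\bm\vp_n$ and using that $\pa_t^\al$ is injective on $H_\al(0,T)$ forces every Fourier coefficient to vanish, hence $\bm u\equiv\bm0$; one must be mildly careful that the a priori regularity class in which the solution is sought makes the projection and the commutation of $\pa_t^\al$ with the inner product $(\cdot,\bm\vp_n)$ legitimate, which is where the precise function spaces in the statement are used. The main obstacle is not any single estimate but the bookkeeping of convergence of the eigenfunction series simultaneously in the whole scale $\bigcap_{0\le\ga\le1}L^{1/\ga}(0,T;D(\cA^\ga))$ and the careful justification that the termwise-constructed object genuinely satisfies \eqref{eq-IBVP0} in the stated weak sense, including the trace $\bm u=\bm0$ on $\pa\Om\times(0,T)$ inherited from $\bm\vp_n\in D(\cA)\subset(H_0^1(\Om))^K$; all of this is standard once the spectral structure of $\cA$ is in hand, so essentially the theorem is a transcription of the scalar theory of \cite{SY11} with $\la_n,\bm\vp_n$ now coming from the coupled elliptic system rather than a single Laplacian, and the real work has already been displaced into Appendix \ref{sec-app}.
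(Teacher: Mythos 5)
Your proposal is correct and follows essentially the same route as the paper: eigenfunction expansion in the basis $\{\bm\vp_n\}$, reduction to scalar fractional ODEs giving the Mittag-Leffler representations, the decay bound $|E_{\al,1}(-\eta)|\le C/(1+\eta)$ combined with Parseval for \eqref{eq-solest1}--\eqref{eq-solest2}, the identity $\int_0^\eta s^{\al-1}|E_{\al,\al}(-\la_n s^\al)|\,\rd s\le1/\la_n$ plus Young's convolution inequality for part (ii), and uniqueness as in \cite{SY11}. The only cosmetic difference is that you obtain the $H_\al(0,T;(H^{-1}(\Om))^K)$ regularity by a termwise duality computation with $\|\bm\vp_n\|_{(H^{-1}(\Om))^K}\lesssim\la_n^{-1/2}$, whereas the paper reads it off the governing equation from $\pa_t^\al(\bm u-\bm u_0)=-\cA\bm u$ together with the $\ga=1/2$ case of \eqref{eq-solest2}; both rest on the same identification $D(\cA^{1/2})=(H_0^1(\Om))^K$.
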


Using the solution representation \eqref{eq-solrep1}, we can immediately obtain the Lipschitz stability for Problem \ref{prob1} below.

\begin{thm}\label{thm-BP}
Let $\bm F\equiv\bm0$ and
choose a finite $T>0$, $\bm u_1\in D(\cA)$ arbitrarily. Then there exists a unique $\bm u_0\in(L^2(\Om))^K$ such that the solution $\bm u$ to \eqref{eq-IBVP0} with the initial value $\bm u_0$ satisfies $\bm u(\,\cdot\,,T)=\bm u_1$ in $\Om$. Moreover, there exists a constant $C_T'>0$ depending only on $\Om,\al,\cA$ and $T$ such that
\[
\|\bm u_0\|_{L^2(\Om)}\le C_T'\|\bm u_1\|_{D(\cA)}.
\]
\end{thm}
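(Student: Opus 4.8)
The plan is to diagonalize \eqref{eq-IBVP0} along the orthonormal eigenbasis $\{\bm\vp_n\}$ of $\cA$ furnished above (and in Appendix \ref{sec-app}), so that the backward problem collapses to a scalar identity involving the Mittag-Leffler function. Write $a_n:=(\bm u_0,\bm\vp_n)$ and $b_n:=(\bm u_1,\bm\vp_n)$. Evaluating the solution representation \eqref{eq-solrep1} of Theorem \ref{thm-FP}(i) at $t=T$ and pairing with $\bm\vp_n$ gives $b_n=E_{\al,1}(-\la_n T^\al)\,a_n$ for every $n$. Since $0<\al<1$, the scalar $E_{\al,1}(-\la_n T^\al)$ is strictly positive, so this determines $a_n$ uniquely, and the candidate initial value is $\bm u_0=\sum_{n=1}^\infty\f{b_n}{E_{\al,1}(-\la_n T^\al)}\bm\vp_n$. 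The whole theorem then amounts to showing that this series converges in $(L^2(\Om))^K$ when $\bm u_1\in D(\cA)$, together with the asserted norm bound, and that no other initial value reproduces $\bm u_1$.

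The one genuinely analytic ingredient is the classical lower bound for the Mittag-Leffler function (see, e.g., \cite{SY11}): for $0<\al<1$ there is a constant $c_\al>0$ with $E_{\al,1}(-x)\ge c_\al/(1+x)$ for all $x\ge0$. Applying it with $x=\la_n T^\al$ and using $\la_n\ge\la_1>0$, I obtain
\[
\f1{E_{\al,1}(-\la_n T^\al)}\le\f{1+\la_n T^\al}{c_\al}\le\f{\la_1^{-1}+T^\al}{c_\al}\,\la_n=:C_T'\,\la_n\qquad(n=1,2,\dots),
\]
where $C_T'$ depends only on $\al$, $T$ and, through $\la_1$, on $\Om,\cA$. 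Squaring, multiplying by $b_n^2$ and summing yields
\[
\|\bm u_0\|_{L^2(\Om)}^2=\sum_{n=1}^\infty a_n^2=\sum_{n=1}^\infty\f{b_n^2}{E_{\al,1}(-\la_n T^\al)^2}\le(C_T')^2\sum_{n=1}^\infty\la_n^2 b_n^2=(C_T')^2\,\|\cA\bm u_1\|_{L^2(\Om)}^2\le(C_T')^2\,\|\bm u_1\|_{D(\cA)}^2.
\]
In particular the series defining $\bm u_0$ converges in $(L^2(\Om))^K$, which establishes existence of $\bm u_0$ and the Lipschitz estimate simultaneously.

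It remains to check that this $\bm u_0$ actually solves the backward problem and is unique, which is routine. Feeding $\bm u_0$ into Theorem \ref{thm-FP}(i), the corresponding solution is $\bm u(\,\cdot\,,t)=\sum_n E_{\al,1}(-\la_n t^\al)\f{b_n}{E_{\al,1}(-\la_n T^\al)}\bm\vp_n$, and at $t=T$ the Mittag-Leffler factors cancel, so $\bm u(\,\cdot\,,T)=\sum_n b_n\bm\vp_n=\bm u_1$ in $(L^2(\Om))^K$. For uniqueness, if $\bm u_0$ and $\tilde{\bm u}_0$ both yield final data $\bm u_1$, then linearity of \eqref{eq-IBVP0} and the scalar identity force $E_{\al,1}(-\la_n T^\al)\big((\bm u_0,\bm\vp_n)-(\tilde{\bm u}_0,\bm\vp_n)\big)=0$ for every $n$, and strict positivity of $E_{\al,1}(-\la_n T^\al)$ gives $\bm u_0=\tilde{\bm u}_0$. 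I do not anticipate a real obstacle: the coupling is entirely absorbed into the spectral decomposition of $\cA$, so the proof is essentially the single-equation argument. The only point deserving attention is that the hypothesis $\bm u_1\in D(\cA)$ is sharp for this $L^2$ estimate, since $1/E_{\al,1}(-\la_n T^\al)$ grows linearly in $\la_n$ and any weaker regularity assumption on $\bm u_1$ would leave the series for $\bm u_0$ possibly divergent.
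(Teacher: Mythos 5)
Your proof is correct and follows essentially the same route as the paper: diagonalize along the eigenbasis of $\cA$, divide by $E_{\al,1}(-\la_n T^\al)$, bound the reciprocal by $C\la_n$ so that the hypothesis $\bm u_1\in D(\cA)$ absorbs the growth, and conclude existence, uniqueness and the Lipschitz estimate from the spectral series. The only (harmless) difference is that you invoke the uniform lower bound $E_{\al,1}(-x)\ge c_\al/(1+x)$, whereas the paper derives the same $O(\la_n)$ control on $1/E_{\al,1}(-\la_n T^\al)$ from the asymptotic expansion in Lemma \ref{lem-ML}(iv) combined with the positivity in Lemma \ref{lem-ML}(iii).
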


As one can observe in the theorem presented above, the stability for the backward problem (Problem 1) exhibits a Lipschitz-type behavior, which aligns with the stability characteristics of the backward problem for a single diffusion equation, as discussed in \cite{LY10, FLY20, FY20}. This Lipschitz stability estimate contrasts with the logarithmic stability encountered in parabolic equations, as noted in \cite{I06}, and it provides insights into the memory effect inherent in the subdiffusion process.

Below we provide the proofs of Theorems \ref{thm-FP}--\ref{thm-BP}. Although the arguments resemble those for single equations (e.g., \cite{SY11}) owing to the self-adjoint structure on the spatial direction, we still sketch the proofs for the sake of completeness. To this end, we summarize some frequently used properties of the Mittag-Leffler functions.

\begin{lem}[\cite{P99,SY11}]\label{lem-ML}
Let $\al\in(0,1),$ $\be\ge\al$ and $\la>0$ be constants.
\begin{enumerate}
\item There exists a constant $C=C(\al,\be)>0$ such that
\[
|E_{\al,\be}(-\eta)|\le\f C{1+\eta},\quad\forall\,\eta\ge0.
\]
\item There holds
\[
\f\rd{\rd t}E_{\al,1}(-\la t^\al)=-\la t^{\al-1}E_{\al,\al}(-\la t^\al),\quad t>0.
\]
\item There holds $E_{\al,1}(-\eta)>0$ for any $\eta\ge0$.
\item If $\mu$ is any real number satisfying $\frac{\pi\alpha}{2}<\mu<\min\{\pi,\pi\alpha\}$, and $p\geq 1$ is an arbitrary integer, then as $|z|\to \infty$ and $\mu\leq |arg(z)|\leq \pi$, the following relation holds
\[
E_{\alpha,\beta}(z)=-\sum_{k=1}^p \frac{z^{-k}}{\Gamma(\beta-\alpha k)} + O\left( |z|^{-1-p} \right).
\]
\end{enumerate}
\end{lem}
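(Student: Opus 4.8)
The plan is to establish the four assertions in order of increasing analytic depth, taking the contour-integral asymptotics of (iv) as the central ingredient from which (i) follows as a corollary. Assertion (ii) is purely formal: since $E_{\al,\be}$ is entire, the series $E_{\al,1}(-\la t^\al)=\sum_{m=0}^\infty(-\la)^m t^{\al m}/\Ga(\al m+1)$ converges uniformly on compact subsets of $(0,\infty)$ together with its termwise $t$-derivative, so I would differentiate term by term, use $\Ga(\al m+1)=\al m\,\Ga(\al m)$ to replace $\al m/\Ga(\al m+1)$ by $1/\Ga(\al m)$, and reindex $m\mapsto m+1$; the differentiated series then collapses to $-\la t^{\al-1}\sum_{m=0}^\infty(-\la)^m t^{\al m}/\Ga(\al m+\al)=-\la t^{\al-1}E_{\al,\al}(-\la t^\al)$.

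For (iv) I would start from the Hankel-type integral representation (valid for $0<\al<2$)
\[
E_{\al,\be}(z)=\f1{2\pi\mathrm i}\int_{\mathrm{Ha}}\f{\zeta^{\al-\be}\e^\zeta}{\zeta^\al-z}\,\rd\zeta,
\]
where $\mathrm{Ha}$ comes in from $-\infty$ below the negative real axis, encircles the disc $|\zeta|\le|z|^{1/\al}$ counterclockwise, and returns to $-\infty$ above it. The hypothesis $\f{\pi\al}2<\mu\le|\arg z|$ is precisely what forces every root of $\zeta^\al=z$ to have argument exceeding $\pi/2$, so after deforming $\mathrm{Ha}$ past these roots the only pole contribution is of $\e^{z^{1/\al}}$-type and hence exponentially small as $|z|\to\infty$ uniformly on the sector. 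Writing $(\zeta^\al-z)^{-1}=-\sum_{k=1}^p z^{-k}\zeta^{\al(k-1)}+\zeta^{\al p}z^{-p}(\zeta^\al-z)^{-1}$ and integrating term by term against the Hankel representation $1/\Ga(s)=(2\pi\mathrm i)^{-1}\int_{\mathrm{Ha}}\e^\zeta\zeta^{-s}\,\rd\zeta$ reproduces exactly the coefficients $-z^{-k}/\Ga(\be-\al k)$ (taking $s=\be-\al k$), while the remaining integral is $O(|z|^{-1-p})$ uniformly for $\mu\le|\arg z|\le\pi$.

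Assertion (i) would then drop out of (iv): putting $z=-\eta$ (so $\arg z=\pi$ is admissible) with $p=1$ gives $E_{\al,\be}(-\eta)=\eta^{-1}/\Ga(\be-\al)+O(\eta^{-2})$, whence $|E_{\al,\be}(-\eta)|\le C'/\eta$ for $\eta\ge\eta_0$; on the compact interval $0\le\eta\le\eta_0$ the entire function $E_{\al,\be}$ is continuous and hence bounded, and combining the two regimes yields $|E_{\al,\be}(-\eta)|\le C/(1+\eta)$ with $C=C(\al,\be)$. The boundary case $\be=\al$ only strengthens the decay, since $1/\Ga(0)=0$ annihilates the leading term.

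Finally, for the positivity (iii) I would invoke the complete monotonicity of $E_{\al,1}(-\,\cdot\,)$ for $0<\al<1$ in the form of Pollard's representation $E_{\al,1}(-\eta)=\int_0^\infty\e^{-\eta s}\,\rd\mu_\al(s)$ with $\mu_\al$ a probability measure on $[0,\infty)$; since $\e^{-\eta s}>0$ and $\mu_\al$ is a nontrivial nonnegative measure, the integral is strictly positive for every $\eta\ge0$, and its vanishing at any $\eta$ would force $\mu_\al\equiv0$, contradicting $E_{\al,1}(0)=\mu_\al([0,\infty))=1$. I expect the main obstacle to be the contour analysis underlying (iv): the delicate points are choosing the deformation of $\mathrm{Ha}$ so that the residue at $\zeta^\al=z$ becomes exponentially negligible exactly under $\mu>\pi\al/2$, and controlling the remainder integral uniformly across the full sector $\mu\le|\arg z|\le\pi$.
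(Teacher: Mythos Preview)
Your proof proposal is correct and follows the standard arguments found in the cited sources \cite{P99,SY11}, but you should be aware that the paper itself does not prove this lemma at all: it is stated with attribution to \cite{P99,SY11} and used as a black box in the proofs of Theorems~\ref{thm-FP} and~\ref{thm-BP}. So there is no ``paper's own proof'' to compare against.

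That said, your outline is sound and essentially reproduces the classical arguments. The termwise differentiation for (ii) is exactly how it is done (see \cite[Lemma~3.2]{SY11} or direct computation from the power series). The Hankel-contour derivation of (iv) is Podlubny's approach (\cite[Theorem~1.4]{P99}), and your description of the mechanism---the condition $\mu>\pi\al/2$ pushes the root $z^{1/\al}$ into the left half-plane so that the residue contribution is exponentially small, and the finite geometric expansion of $(\zeta^\al-z)^{-1}$ together with Hankel's formula for $1/\Ga$ yields the asymptotic series---is accurate. Deducing (i) from (iv) by specializing to $z=-\eta$ and splitting into large and bounded $\eta$ is exactly the route taken in \cite[Theorem~1.6]{P99}; your remark that $\be=\al$ kills the leading term via $1/\Ga(0)=0$ is a correct refinement. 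For (iii), invoking Pollard's complete-monotonicity representation is the cleanest route and is what underlies the statement in \cite{SY11}.

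One minor point: in (iv) the condition for $0<\al<1$ reads $\pi\al/2<\mu<\pi\al$, so the admissible $\mu$-window is narrow; you correctly work within it, but the phrase ``every root of $\zeta^\al=z$'' should be read as the principal root lying outside the contour after deformation---there is only one relevant root in the principal sheet.
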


\begin{proof}[Proof of Theorem $\ref{thm-FP}$]
(i) Following the same line as that in \cite{SY11}, one can readily verify that \eqref{eq-solrep1} actually provides the unique solution to \eqref{eq-IBVP0} when $\bm F=\bm0$. Using the expression for $\bm u$ in \eqref{eq-solrep1} and the definition of $D(\cA^\ga)$, we employ Lemma \ref{lem-ML}(i) to estimate
\begin{align*}
\|\bm u(\,\cdot\,,t)\|^2_{D(\cA^\ga)} & =\left\|\sum_{n=1}^\infty E_{\al,1}(-\la_n t^\al)(\bm u_0,\bm\vp_n)\la_n^\ga\bm\vp_n\right\|_{L^2(\Om)}^2=\sum_{n=1}^\infty\left|E_{\al,1}(-\la_n t^\al)\la_n^\ga(\bm u_0,\bm\vp_n)\right|^2\\
& \le\sum_{n=1}^\infty\left(\f C{1+\la_n t^\al}\right)^2\left|\la_n^\ga(\bm u_0,\bm\vp_n)\right|^2=C^2\sum_{n=1}^\infty\left(\f{(\la_n t^\al)^\ga}{1+\la_n t^\al}\right)^2\left|(\bm u_0,\bm\vp_n)t^{-\al\ga}\right|^2\\
& \le(C\,t^{-\al\ga})^2\sum_{n=1}^\infty|(\bm u_0,\bm\vp_n)|^2=\left(C\|\bm u_0\|_{L^2(\Om)} t^{-\al\ga}\right)^2.
\end{align*}
This implies that \eqref{eq-solest1} holds true, and uniqueness can be directly obtained by setting $\bm u_0=\bm0$ (see \cite{SY11} for details). Therefore, it is straightforward to deduce \eqref{eq-solest2} by
\[
\|\bm u\|_{L^{1/\ga}(0,T;D(\cA^\ga))}=\left(\int_0^T\|\bm u(t)\|_{D(\cA^\ga)}^{1/\ga}\,\rd t\right)^\ga
\le C\left(\f{T^{1-\al}}{1-\al}\right)^\ga\|\bm u_0\|_{L^2(\Om)}.
\]
Especially, setting $\ga=1/2$ in \eqref{eq-solest2} yields
\[
\bm u\in L^2(0,T;D(\cA^{1/2}))=L^2(0,T;(H_0^1(\Om))^K).
\]
Since $\cA$ is a second order elliptic operator with bounded coefficients, the governing equation in \eqref{eq-IBVP0} indicates that $\pa_t^\al(\bm u-\bm u_0)=-\cA\bm u\in L^2(0,T;(H^{-1}(\Om))^K)$. Thus, we see that $\bm u-\bm u_0$ belongs to $H_\al(0,T;(H^{-1}(\Om))^K)$ and shares the same estimate as \eqref{eq-solest2}. This completes the proof of (i).\medskip

(ii) We will show that \eqref{eq-solrep2} certainly gives the solution to \eqref{eq-IBVP0} when $\bm u_0\equiv\bm0$. Taking into account Lemma \ref{lem-ML}(ii)--(iii), similar to \cite{SY11}, we can derive
\begin{equation}\label{eq-mitint}
\int_0^\eta\left|t^{\al-1}E_{\al,\al}(-\la_n t^\al)\right|\rd t=\f{1-E_{\al,1}(-\la_n \eta^\al)}{\la_n}\le\f1{\la_n},\quad\forall\,\eta>0.
\end{equation}
Meanwhile, direct calculations yield
\begin{align}
& \quad\,\pa_t^\al\int_0^t s^{\al-1}E_{\al,\al}(-\la_n s^\al)(\bm F(\,\cdot\,,t-s),\bm\vp_n)\,\rd s\nonumber\\
& =-\la_n\int_0^t s^{\al-1}E_{\al,\al}(-\la_n s^\al)(\bm F(\,\cdot\,,t-s),\bm\vp_n)\,\rd s+(\bm F(\,\cdot\,,t),\bm\vp_n).\label{eq-mitlap}
\end{align}
Utilizing \eqref{eq-mitint}--\eqref{eq-mitlap} and applying Young's inequality for convolutions, we obtain
\begin{align*}
& \quad\,\left\|\pa_t^\al\int_0^t s^{\al-1}E_{\al,\al}(-\la_n s^\al)(\bm F(\,\cdot\,,t-s),\bm\vp_n)\,\rd s\right\|_{L^2(0,T)}^2\\
& \le C\int_0^T(\bm |\bm F(\,\cdot\,,t),\bm\vp_n)|^2\,\rd t+C\left(\int_0^T|(\bm F(\,\cdot\,,t),\bm\vp_n)|^2\,\rd t\right)\left(\int_0^T\left|\la_n t^{\al-1} E_{\al,\al}(-\la_n t^\al)\right|\rd t\right)^2\\
& \le C\int_0^T|(\bm F(\,\cdot\,,t),\bm\vp_n)|^2\,\rd t
\end{align*}
and therefore
\begin{align*}
\|\bm u\|^2_{H_\al(0,T;(L^2(\Om))^K)} & =\sum_{n=1}^\infty\int_0^T\left|\pa_t^\al\int_0^t s^{\al-1}E_{\al,\al}(-\la_n s^\al)(\bm F(\,\cdot\,,t-s),\bm\vp_n)\,\rd s\right|^2\rd t\\
& \le C\int_0^T|(\bm F(\,\cdot\,,t),\bm\vp_n)|^2\,\rd t=C\|\bm F\|^2_{L^2(0,T;(L^2(\Om))^K)}.
\end{align*}
Again by the governing equation in \eqref{eq-IBVP0}, we see $\cA\bm u=\bm F-\pa_t^\al\bm u$, which means
\[
\|\cA\bm u\|_{L^2(0,T;(L^2(\Om))^K)}\le C\|\bm F\|_{L^2(0,T;(L^2(\Om))^K)}
\]
and thus implies \eqref{eq-solest4}. Finally, the uniqueness can be similarly obtained as in \cite{SY11}. We have completed the proof of Theorem \ref{thm-FP}.
\end{proof}

\begin{proof}[Proof of Theorem $\ref{thm-BP}$]
According to \eqref{eq-solrep1}, we have
\[
\bm u_1 :=\bm u(\,\cdot\,,T)=\sum_{n=1}^\infty E_{\al,1}(-\la_n T^\al)(\bm u_0,\bm\vp_n)\bm\vp_n.
\]
By $\bm u_1\in(H^2(\Om)\cap H_0^1(\Om))^K$ and the norm equivalence between $H^2(\Om)\cap H_0^1(\Om)$ and $D(\cA)$, there exist constants $C_2'>C_1'>0$ such that
\[
C_1'\|\bm u_1\|_{H^2(\Om)}^2\le\sum_{n=1}^\infty|\la_n(\bm u_1,\bm\vp_n)|^2\le C_2'\|\bm u_1\|_{H^2(\Om)}^2.
\]
By Lemma \ref{lem-ML}(iii), we have $E_{\al,1}(-\la_n t^\al)>0$ for any $t>0$. Then we can reconstruct the initial value by
\[
\bm u_0=\sum_{n=1}^\infty\f{(\bm u_1,\bm\vp_n)}{E_{\al,1}(-\la_n T^\al)}\bm\vp_n.
\]
As a result, it follows from Lemma \ref{lem-ML}(iv) that
\begin{align*}
\|\bm u_0\|_{L^2(\Om)}^2 &
=\sum_{n=1}^\infty\left(\f{(\bm u_1,\bm\vp_n)}{E_{\al,1}(-\la_n T^\al)}\right)^2=\sum_{n=1}^\infty\left(\f{\la_n T^\al\Ga(1-\al)(\bm u_1,\bm\vp_n)}{1+O(\la_n^{-1}T^{-\al})}\right)^2\\
& \le(C T^\al)^2\sum_{n=1}^\infty|\la_n(\bm u_1,\bm\vp_n)|^2\le\left(C C_2'T^\al\|\bm u_1\|_{H^2(\Om)}\right)^2,
\end{align*}
which completes the proof of Theorem \ref{thm-BP}.
\end{proof}


\section{Inversion Algorithm and Numerical Results}\label{sec-numer}

In the following, we present comprehensive numerical results pertaining to the aforementioned backward problem. We firstly outline the precise architecture of the neural network utilized in our study. Subsequently, we demonstrate one-dimensional and two-dimensional numerical examples to illustrate the effectiveness of our inversion algorithm. Beyond merely demonstrating its efficacy, we also provide partial validation of the Lipschitz stability, as stated in Theorem \ref{thm-BP}, thereby further strengthening the reliability of our approach.

\subsection{Neural network architecture}

The proposed backward problem involves reconstructing the initial condition $\bm u_0$ based on the final time measurement $\bm u(x,T)$ where $x\in \Omega$. While traditional approaches rely on standard regularization schemes to address this inverse problem\cite{LY10,RXL14}, we aim to harness the potential of neural networks in solving such challenges. However, due to the inherent ill-posed and nonlocal nature of the backward problem, directly applying fractional physics-informed neural networks or other neural network architectures often leads to ineffective algorithms for the inverse problem of the coupled systems. Therefore, we will develop a novel neural network architecture designed specifically to solve this inverse problem.
Let us first describe the standard inversion algorithm by recalling the solution expansion form \eqref{eq-solrep1} and set $t=T$ such that
\[
\bm u(\,\cdot\,,T)=\sum_{n=1}^\infty E_{\al,1}(-\la_n T^\al)(\bm u_0,\bm\vp_n)\bm\vp_n.
\]
We can observe that the function values at the final time are expressed as a linear combination of basis functions $\bm \vp_n$. Furthermore, the coefficients of these basis functions consist of the projection of the initial value $\bm{u}_0$ onto the basis functions, as well as the Mittag-Leffler functions $E_{\alpha,1}(-\lambda_n T^\alpha)$. It is important to note that once the diffusion coefficients of the equation, specifically $\mathcal{A}$ in \eqref{eq-IBVP0}, are fixed, $\lambda_n$ and $\bm \varphi_n$ are fixed.
It is straightforward to observe that the final time value $\bm{u}_T$ is determined by $\bm{u}_0$. If we denote the mapping from initial values $\bm u_0$ to $\bm u_T$ as $\mathcal{K}$, then we have
$$
\bm{u}_T = \mathcal{K}\bm{u}_0.
$$
Here, $\mathcal{K}$ is the forward operator and includes the information of the coupled equations, and our objective is using the neural networks to learn information of this mapping. For extended discussion on operator learning, we refer to \cite{dL2022, dK2023}, as well as the review article \cite{AM2019}. Once we have established the mapping from initial values to final values, for the backward problem, the standard Tikhonov regularization method can be expressed as follows:
\begin{align}\label{eq-BackwardTikhonov}
\bm{u}_0 \approx   (\mathcal{K}^T \mathcal{K}+\epsilon I)^{-1}\mathcal{K}^T \bm{u}_T
\end{align} by introducing a regularization parameter $\epsilon$.

We employ a multi-channel neural network to approximate the mapping $(\mathcal{K}^T \mathcal{K}+\epsilon I)^{-1}\mathcal{K}^T$ in \eqref{eq-BackwardTikhonov}, and we illustrate the network architecture depicted in Figure \ref{Fig:inverse_architecture} using a 2-dimensional case. Specifically, we design two neural networks to approximate the mappings $\mathcal{K}^T$ and $(\mathcal{K}^T \mathcal{K}+\epsilon I)^{-1}$.
For the first neural network, approximating $\mathcal{K}^T$, we consider a coupled system with $K$ components. Each discrete final time value is represented as a matrix of size $N_x \times N_y$, forming an input tensor of dimensions $N_x \times N_y \times K$. After passing through a Convolutional Layer (CNN), we obtain an intermediate tensor of dimensions $N_n \times N_n \times K$, where $N_n$ is determined by the size of the convolutional kernel. This layer establishes connections between the final values, enabling the network to capture interactions between different diffusion terms. Next, a Split Layer divides the intermediate tensor into $K$ separate tensors, each of size $N_n \times N_n$. These tensors then propagate through several Fully Connected Layers (FC) within each channel, allowing the network to learn the diffusion process from initial to final values. A Concatenate Layer combines the outputs from all channels, resulting in a tensor of dimensions $N_m \times N_m \times K$. However, this tensor is not our final output for the backward mapping.
To approximate the second mapping $(\mathcal{K}^T \mathcal{K}+\epsilon I)^{-1}$, we introduce another Convolutional Layer at the end. Instead of manually setting the regularization parameter $\epsilon$, we allow the network to adaptively learn this parameter during training. The process for other dimensional spaces is similar, requiring only adjustments to the data dimensions.
\begin{figure}[htbp]
\centering
\includegraphics[width=1\linewidth]{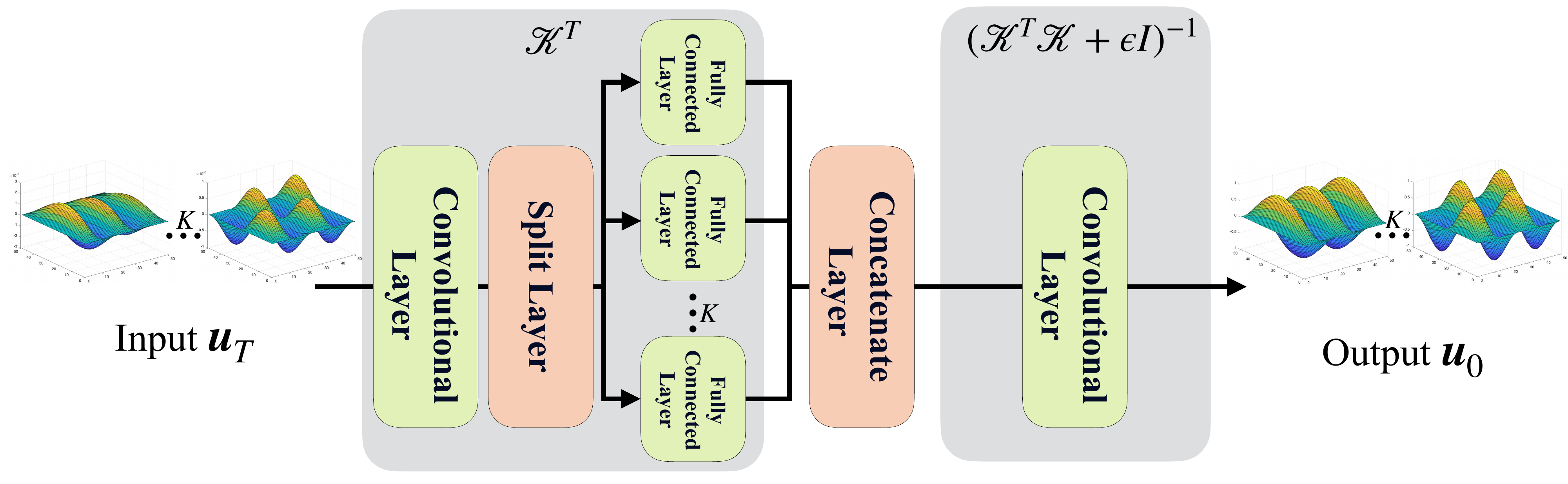}
\caption{Multi-channel neural network architecture for the backward problem.}
\label{Fig:inverse_architecture}
\end{figure}

The neural network is implemented using Keras, which acts as a high-level API for TensorFlow. We employ the Nadam optimizer\cite{D2016} with a learning rate set to  $10^{-3}$ and adopt the mean squared error in the loss function. Specifically, we configure the fully connected (FC) network with two layers, using the tanh activation function and the sinusoid activation function for the two layers, respectively. Given this neural network architecture, we can utilize traditional numerical schemes, such as the finite difference methods in \cite{JZ23, LX2007}, to generate the requisite data pairs for training the network.

\subsection{Numerical experiments}

In this subsection, we set $K=2$ in Figure \ref{Fig:inverse_architecture} and investigate the following coupled equations
\begin{equation}\label{Eq:lin_sys_num}
\begin{cases}
\!\begin{aligned}
& \pa_t^\al(u-u_0)-\rdiv(\bm A(\bm x)\nabla u)+c_{11}u+c_{12}v=0\\
& \pa_t^\al(v-v_0)-\rdiv(\bm B(\bm x)\nabla v)+c_{21}u+c_{22}v=0
\end{aligned} & \mbox{in }\Om\times(0,T),\\
u=v=0 & \mbox{on }\pa\Om\times(0,T).
\end{cases}
\end{equation}
We present numerical examples for both one-dimensional and two-dimensional scenarios,  assuming $c_{11}=c_{22}=1$ and $c_{12}=c_{21}=-1$ in the coupled system given by \eqref{Eq:lin_sys_num}. For the sake of simplicity and convenience in programming,  in the one-dimensional case, we set
\[
A(x)=B(x)=1+x,
\]
and in the two-dimensional case, we set
\[
\bm A(x,y)=\begin{pmatrix}
1 + x^2 + y^2 & 0 \\
0 & 1 + x^2 + y^2
\end{pmatrix},\ \bm B(x,y)=\begin{pmatrix}
3 + \cos{x} + \cos{y} & 0 \\
0 & 3 + \cos{x} + \cos{y}
\end{pmatrix}.
\]

\subsubsection*{One-diminsonal case}

\begin{figure}[htbp]
\centering
\begin{subfigure}[b]{0.4\linewidth}
\includegraphics[width=\linewidth]{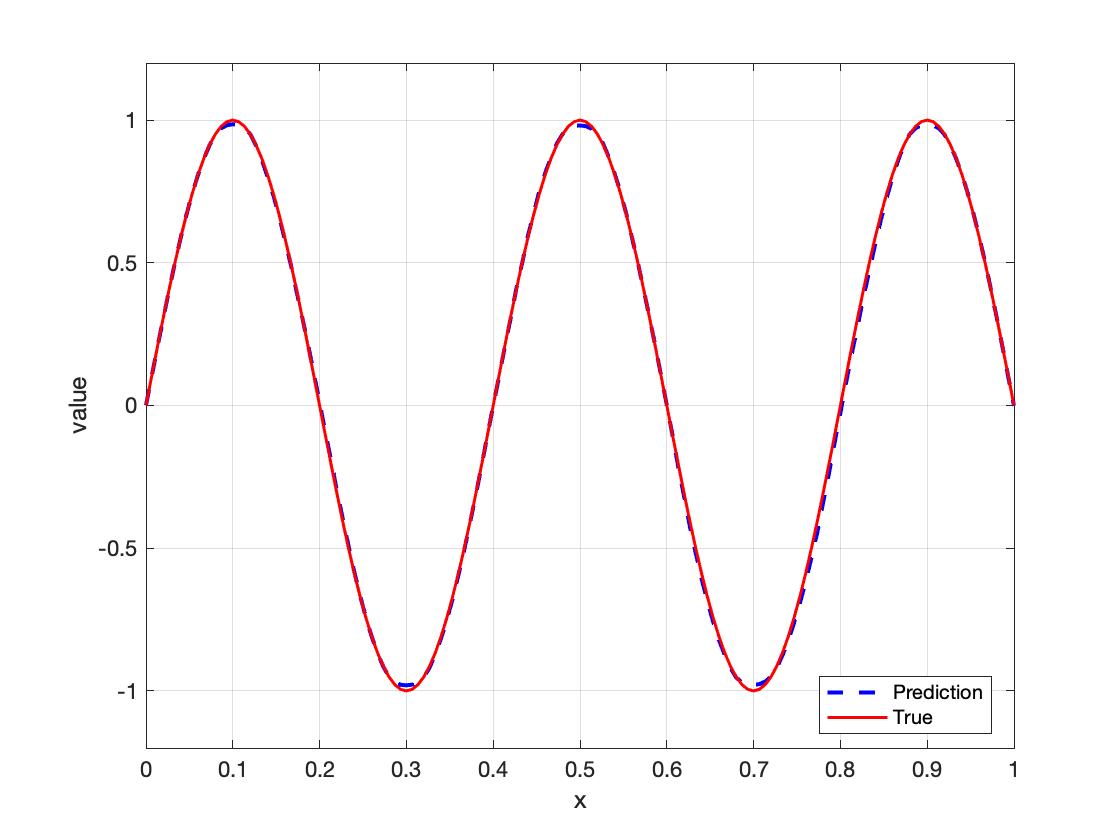}
\caption{Reconstruction with noise-free data}
\end{subfigure}
\begin{subfigure}[b]{0.4\linewidth}
\includegraphics[width=\linewidth]{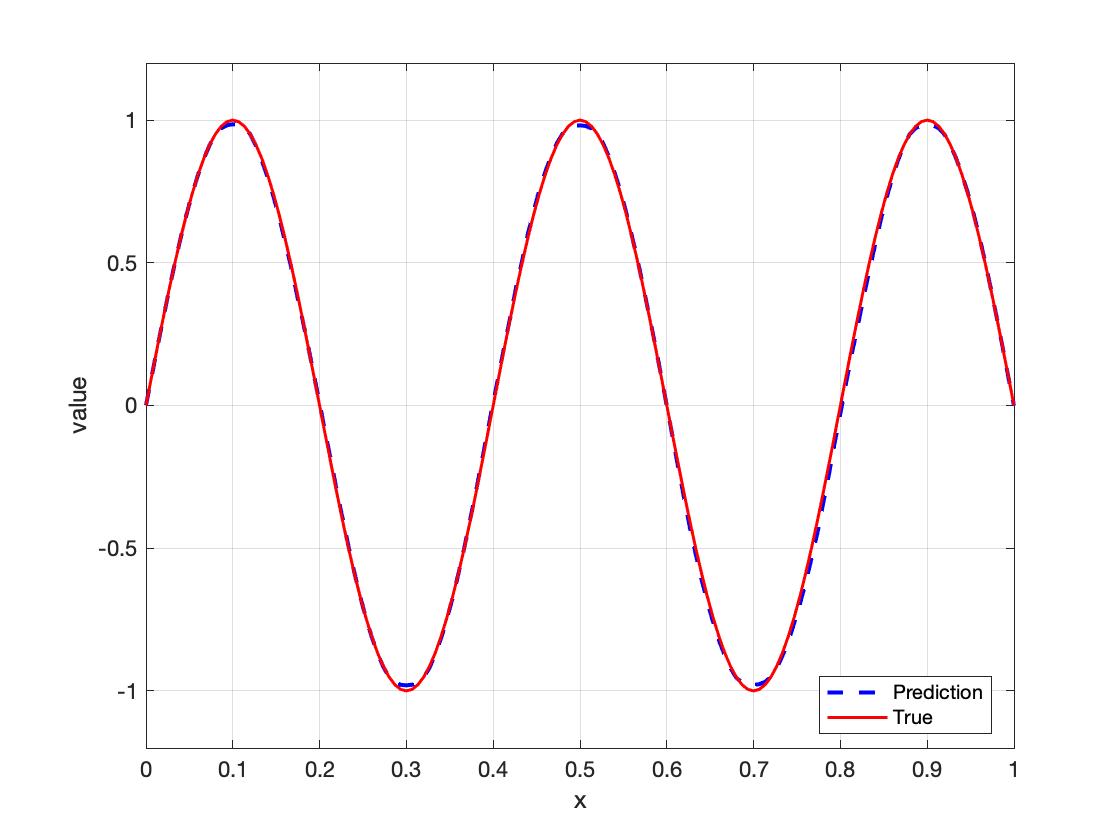}
\caption{Reconstruction with  1\% noise}
\end{subfigure}
\\
\begin{subfigure}[b]{0.4\linewidth}
\includegraphics[width=\linewidth]{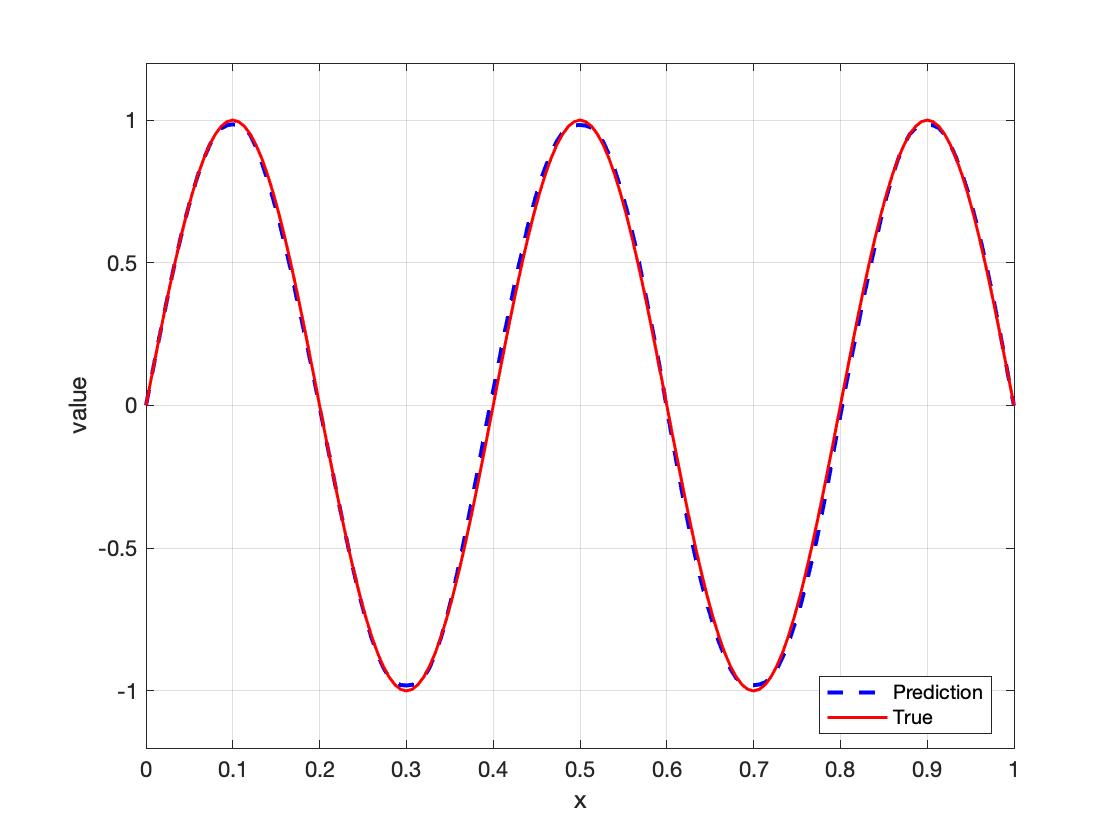}
\caption{Reconstruction with 5\% noise}
\end{subfigure}
\begin{subfigure}[b]{0.4\linewidth}
\includegraphics[width=\linewidth]{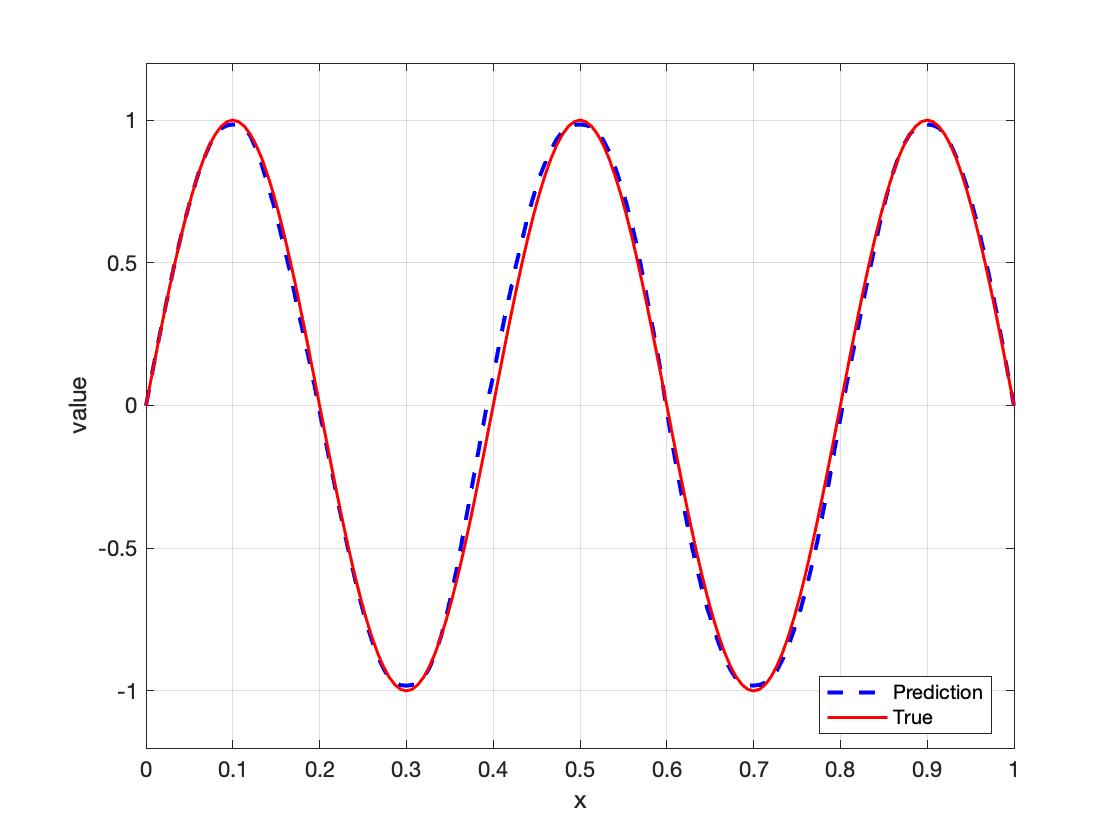}
\caption{Reconstruction with 10\% noise}
\end{subfigure}
\caption{True and reconstructed solutions for $\alpha=0.2$ and $T=1$ with noise-free and $1\%$, $5\%$, $10\%$ relative-noise measurements. In all panels, the red solid line is the true initial value $u_0(x)=\sin(5\pi x)$ and the blue dashed line is the reconstructed initial value, while $v_0(x)=\sin(3\pi x)$.}
\label{Fig:s23a2}
\end{figure}

\begin{figure}[htbp]
\centering
\begin{subfigure}[b]{0.4\linewidth}
\includegraphics[width=\linewidth]{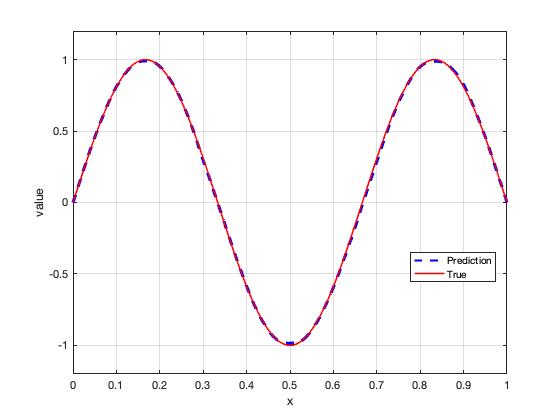}
\caption{Reconstruction with noise-free data}
\end{subfigure}
\begin{subfigure}[b]{0.4\linewidth}
\includegraphics[width=\linewidth]{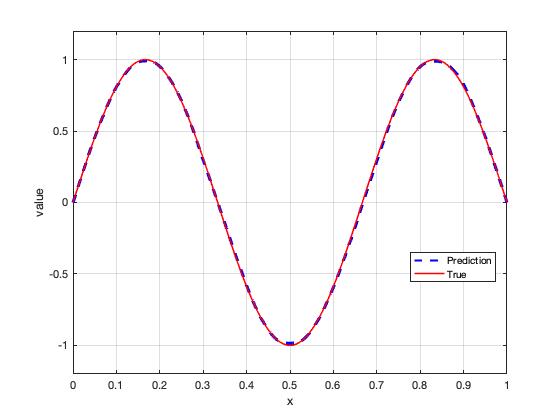}
\caption{Reconstruction with  1\% noise}
\end{subfigure}
\\
\begin{subfigure}[b]{0.4\linewidth}
\includegraphics[width=\linewidth]{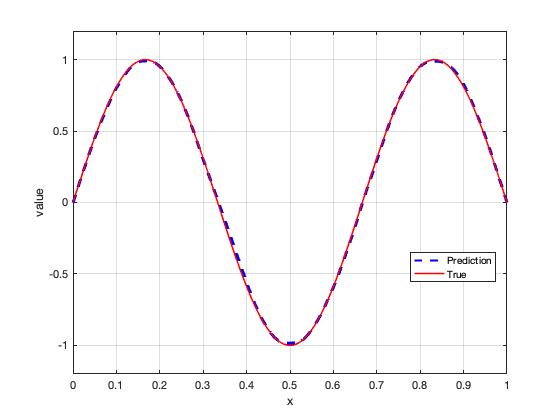}
\caption{Reconstruction with  5\% noise}
\end{subfigure}
\begin{subfigure}[b]{0.4\linewidth}
\includegraphics[width=\linewidth]{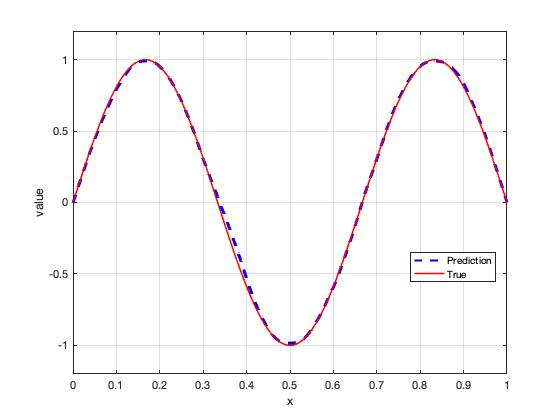}
\caption{Reconstruction with 10\% noise}
\end{subfigure}
\caption{True and reconstructed solutions for $\alpha=0.2$ and $T=1$, with noise-free and $1\%$, $5\%$, $10\%$ relative-noise measurements. In all panels, the red solid line is the true initial value $u_0(x)=\sin(3\pi x)$ and the blue dashed line is the reconstructed initial value,  while $v_0(x)=\sin(4\pi x)$.}
\label{Fig:s14a2}
\end{figure}

In the one-dimensional scenario, we begin with initial values $(u_0,v_0)$  and proceed to compute the corresponding final values $(u_T,v_T)$ at a specified time $T$. We select
$$u_0(x)=\sin(i\pi x), \quad v_0(x)=\sin(j\pi x),$$
where $i,j=1,\cdots,5$, and generate the corresponding $(u_T,v_T)$. This results in 25 sets of $(u_T, v_T)$ and $(u_0, v_0)$ pairs, which we use as training data. We then input these pairs into the neural network depicted in Figure \ref{Fig:inverse_architecture}. It is important to note that, in the one-dimensional context, the tensor shown in Figure \ref{Fig:inverse_architecture} shall be adapted to a suitable second-order tensor.

We set the spatial discretization $N_x=150$ and compute the numerical solution at $T=1$ as the measurement data $(u_T,v_T)$.  Figures \ref{Fig:s23a2} and \ref{Fig:s14a2} present the numerical results obtained for the backward problem, specifically for the unknown solutions $u_0(x)= \sin(5\pi x), v_0(x)= \sin(3\pi x)$ and $u_0(x)= \sin(3\pi x), v_0(x)= \sin(4\pi x)$, respectively.
It is evident that when the unknown solution is included in the training set, the trained neural network is able to achieve a remarkably accurate reconstruction. Even under noisy observation conditions, the neural network exhibits considerable robustness. As the noise level increases, the quality of the initial value reconstruction may decrease somewhat, but the neural network still does well in finding the peak positions and capturing the overall trend of the initial values. Here we note that all noisy measurements are represented by $\boldsymbol{u}_T^{\delta}:=\boldsymbol{u}_T+ \delta \xi$, where $\xi$ is a normalized Gaussian random variable with zero mean and unit variance. The parameter $\delta$ controls the level of noise, and we select relative noise levels of $\delta=1\%, 5\%$, and $10\%$ to evaluate the performance under different noise conditions.

\begin{figure}[htbp]
\centering
\begin{minipage}{0.45\textwidth}
\centering    \includegraphics[width=\linewidth]{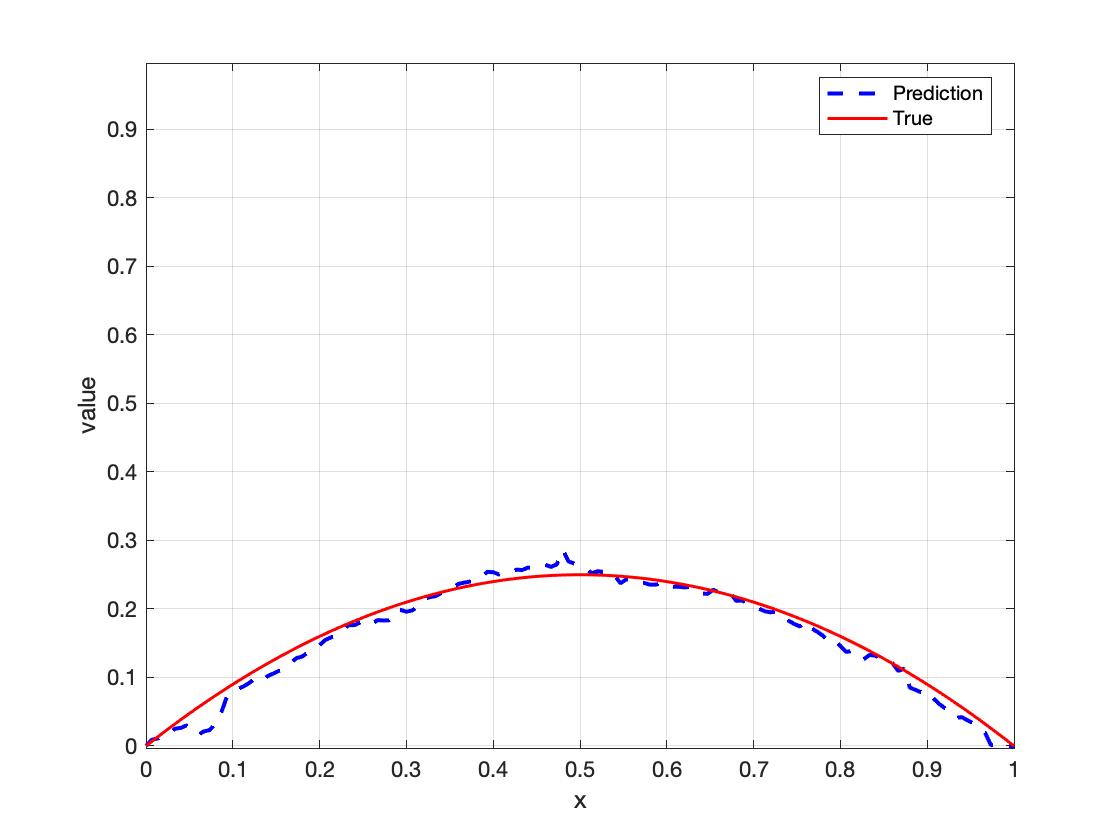}
\end{minipage}
\hfill
\begin{minipage}{0.45\textwidth}
\centering
\includegraphics[width=\linewidth]{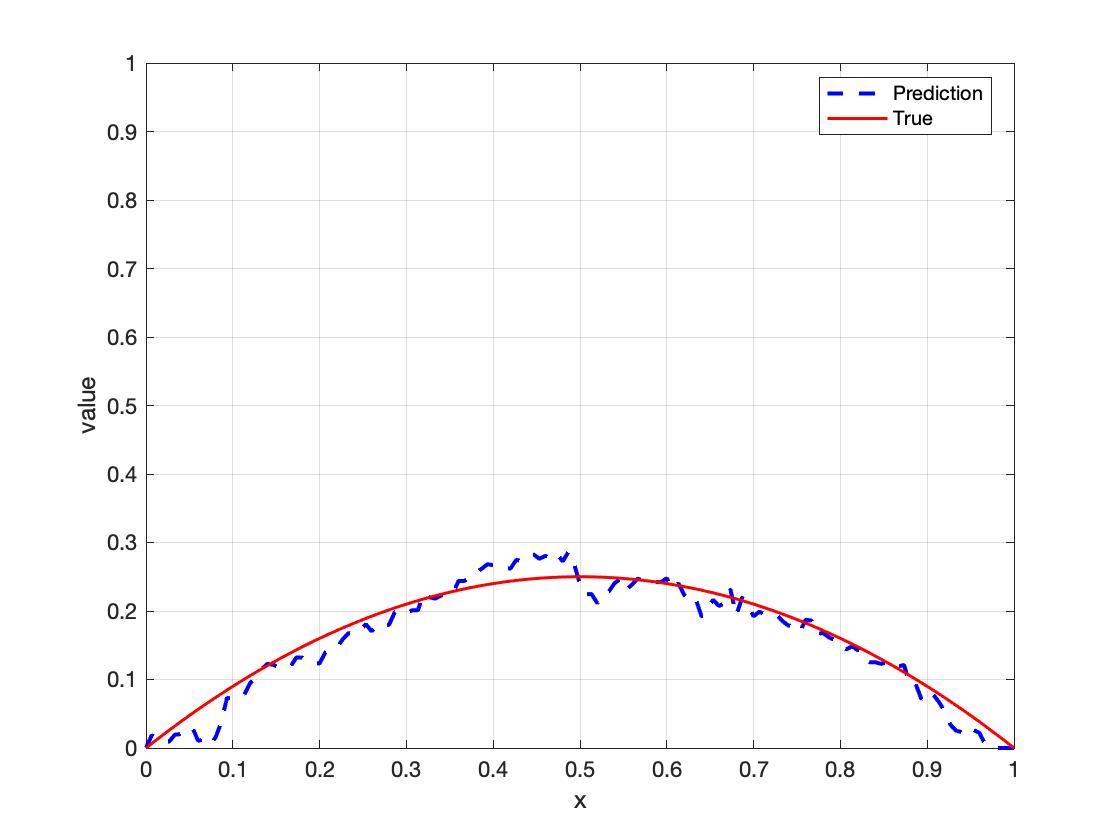}
\end{minipage}
\caption{True and reconstructed solutions for $\alpha=0.2$ and $T=1$, with noise-free and $10\%$ relative-noise measurements. In both panels, the red solid line is the true initial value $u_0(x)=v_0(x)=x(1-x)$ and the blue dashed line is the reconstructed initial value.}
\label{Fig:x_1minusx}
\end{figure}
The following numerical example demonstrates that the trained neural network has good generalization capabilities, rather than just memorizing the training data set. To validate this, we test the neural network with final time measurements that are not part of the original training set. Remarkably, it still produces excellent inversion results. Specifically, using the classical finite difference method, we compute the final time measurement data $u_T$ and $v_T$ at time $T=1$ for the initial values $u_0(x) = v_0(x) = x(1-x)$. We then use these measurement data as input for the trained neural network. In Figure \ref{Fig:x_1minusx}, we show the reconstructed results of the neural network for the initial values by noise-free and noisy measurement with a relative noise level of $10\%$.

\subsubsection*{Two-dimensional case}

\begin{figure}[htbp]\centering
\begin{subfigure}[b]{0.4\linewidth}
\includegraphics[width=\linewidth]{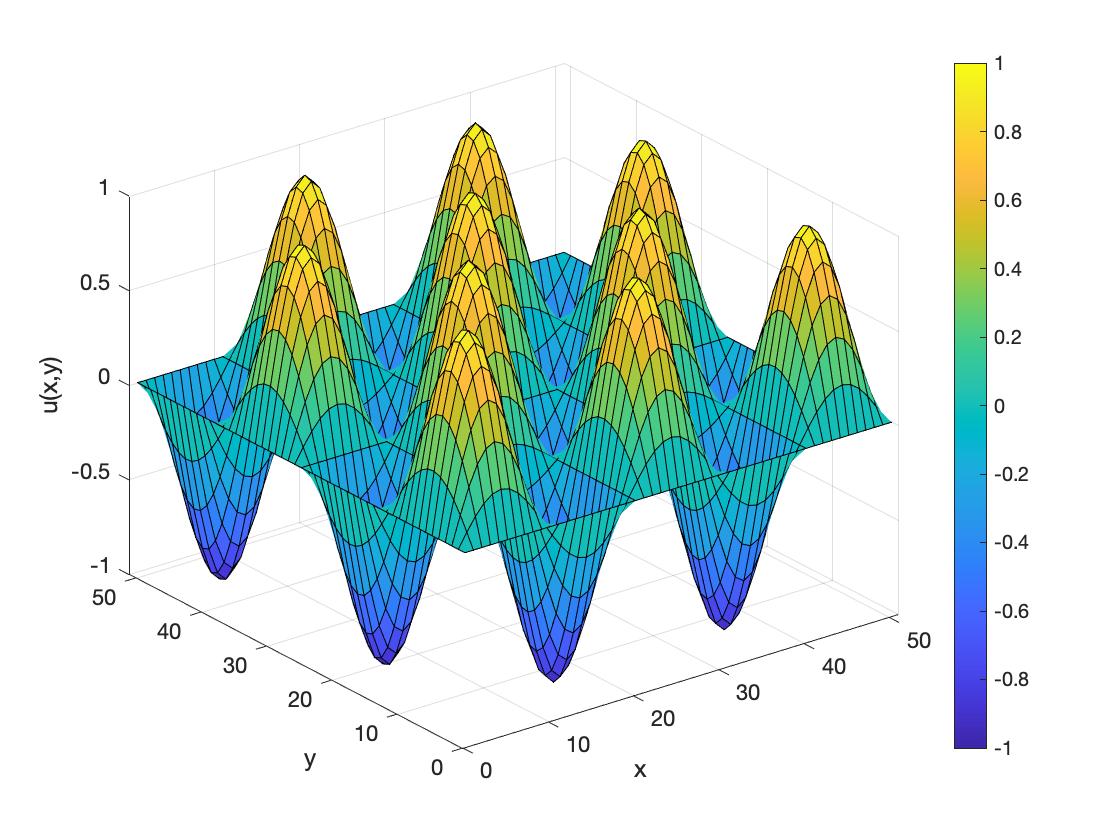}
\caption{True initial value}
\end{subfigure}
\begin{subfigure}[b]{0.4\linewidth}
\includegraphics[width=\linewidth]{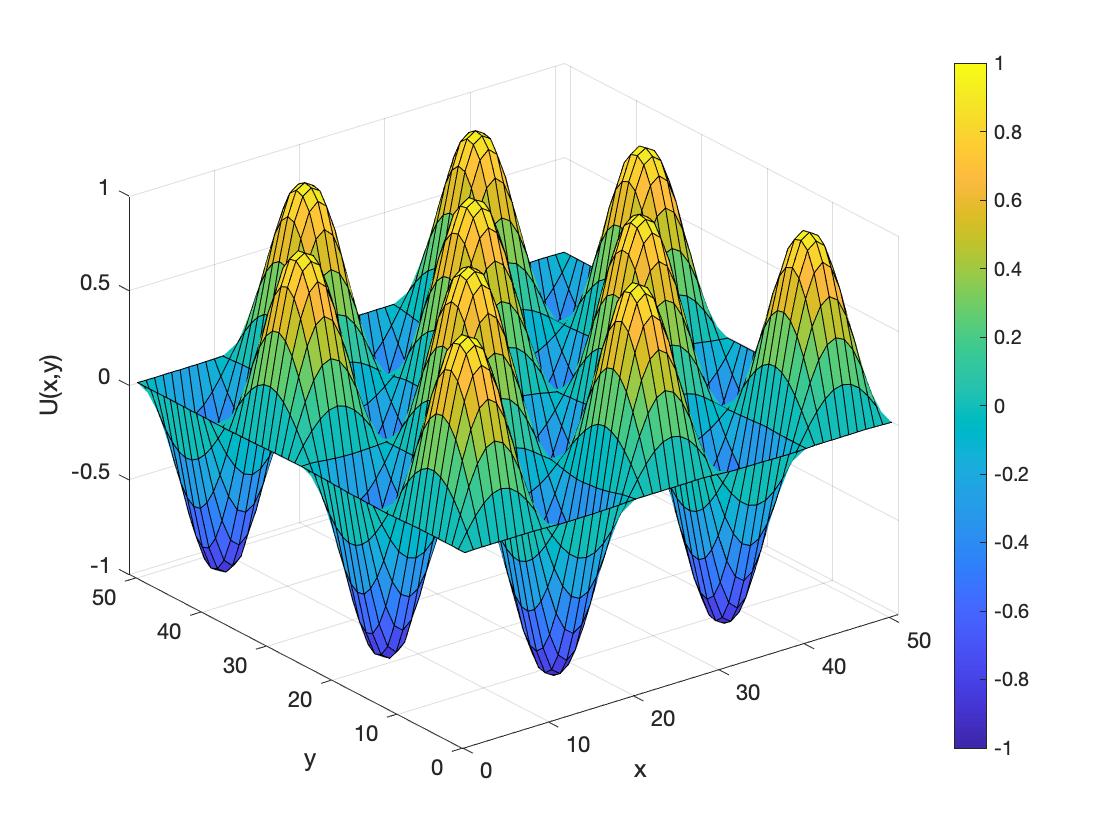}
\caption{Reconstruction with noise-free data}
\end{subfigure}\\
\begin{subfigure}[b]{0.4\linewidth}
\includegraphics[width=\linewidth]{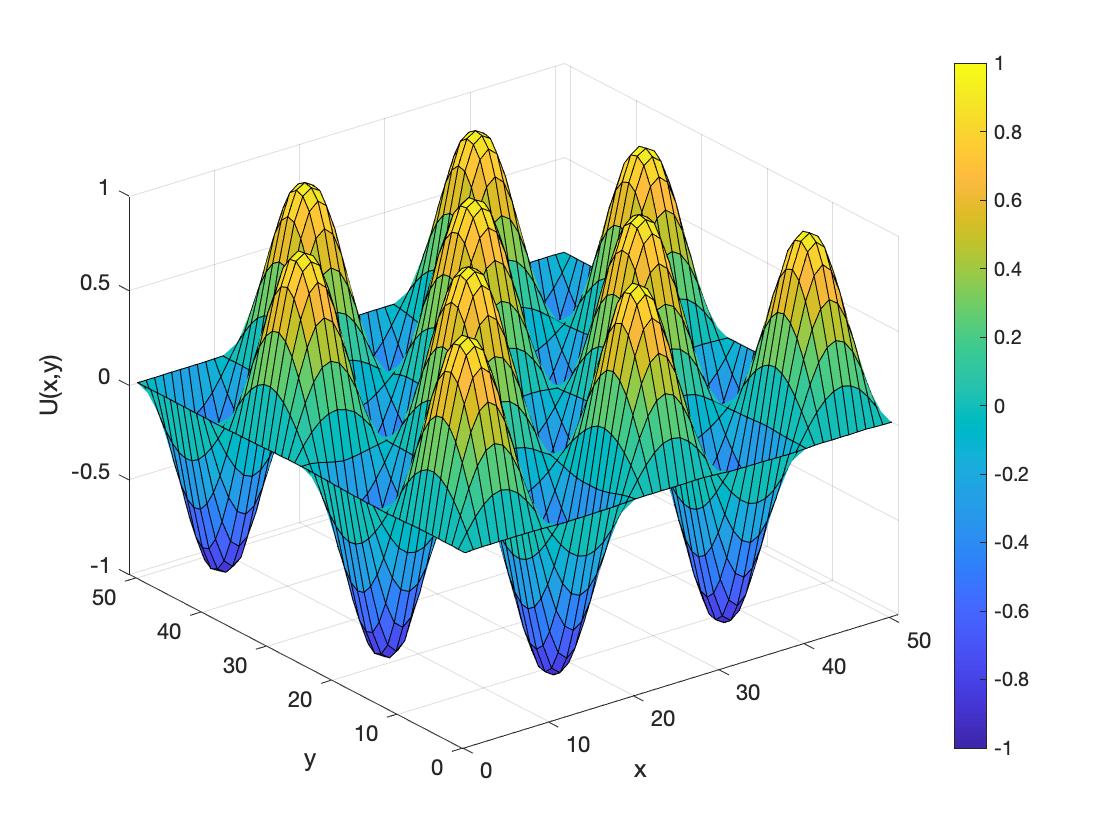}
\caption{Reconstruction with 5\% noise}
\end{subfigure}
\begin{subfigure}[b]{0.4\linewidth}
\includegraphics[width=\linewidth]{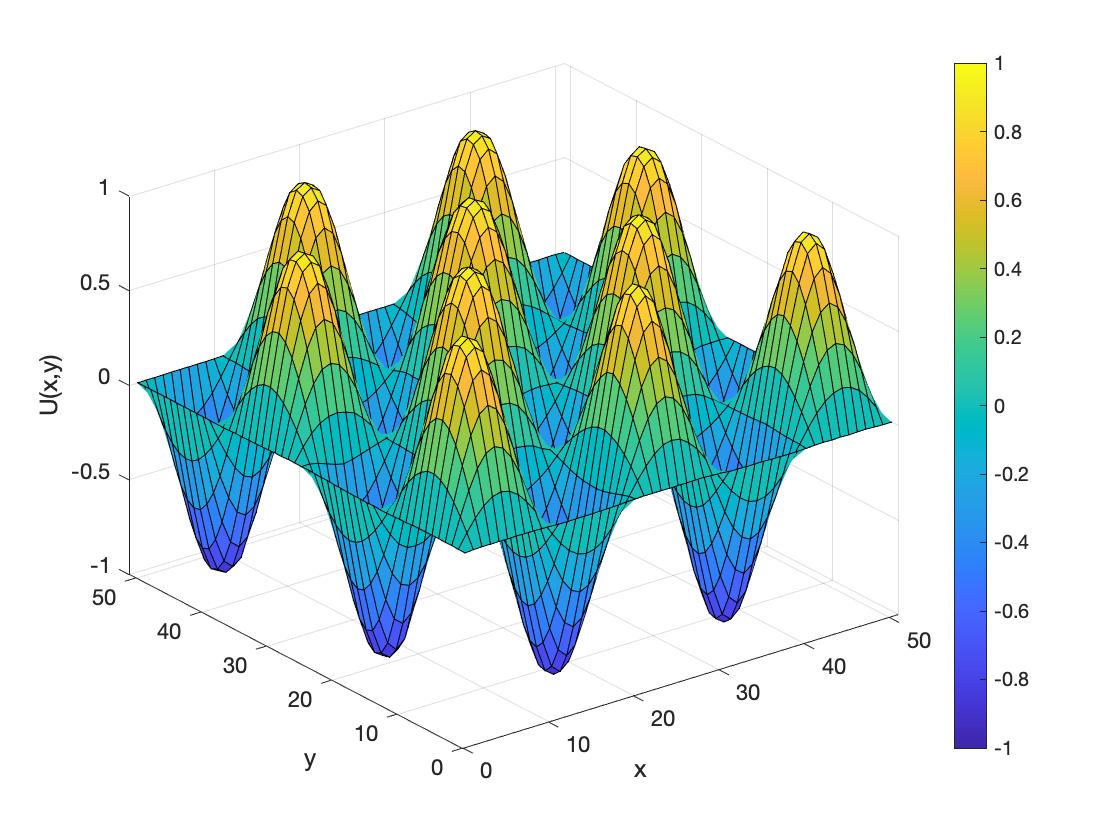}
\caption{Reconstruction with 10\% noise}
\end{subfigure}
\caption{True and reconstructed solutions for $\alpha=0.4$ and $T=1$, with noise-free and $5\%$, $10\%$ relative-noise measurements, where $u_0(x,y)=\sin(5\pi x)\sin(4\pi y), v_0(x,y)=\sin(\pi x)\sin(3\pi y)$.}
\label{Fig:s8a4}
\end{figure}

We next present several two-dimensional numerical examples. In the two-dimensional case, the training data is selected as follows. We establish the training initial data as
$$
u_0(x,y)=\sin(i\pi x)\sin(j\pi y), \quad v_0(x,y)=\sin(k\pi x)\sin(\ell\pi y),
$$
where $i,j,k,\ell=1,\cdots,5$, to generate a total of 625 pairs of final measurement of $(u_T,v_T)$ by the finite difference method. We then utilize these pairs as the training data and input them into the neural network to learn the mapping from the final measurements to the initial value.
Similar to the previous subsection, we first select one sample from the training data set to demonstrate the neural network's effectiveness, for instance by $u_0(x,y)=\sin(5\pi x)\sin(4\pi y)$ and $v_0(x,y)=\sin(\pi x)\sin(3\pi y)$.
Figure \ref{Fig:s8a4} illustrates the reconstructed initial values for $u_0$ under different measurements: with noise-free and $5\%$, $10\%$ relative-noise measurements.

\begin{figure}[htbp]\centering
\begin{subfigure}[b]{0.4\linewidth}
\includegraphics[width=\linewidth]{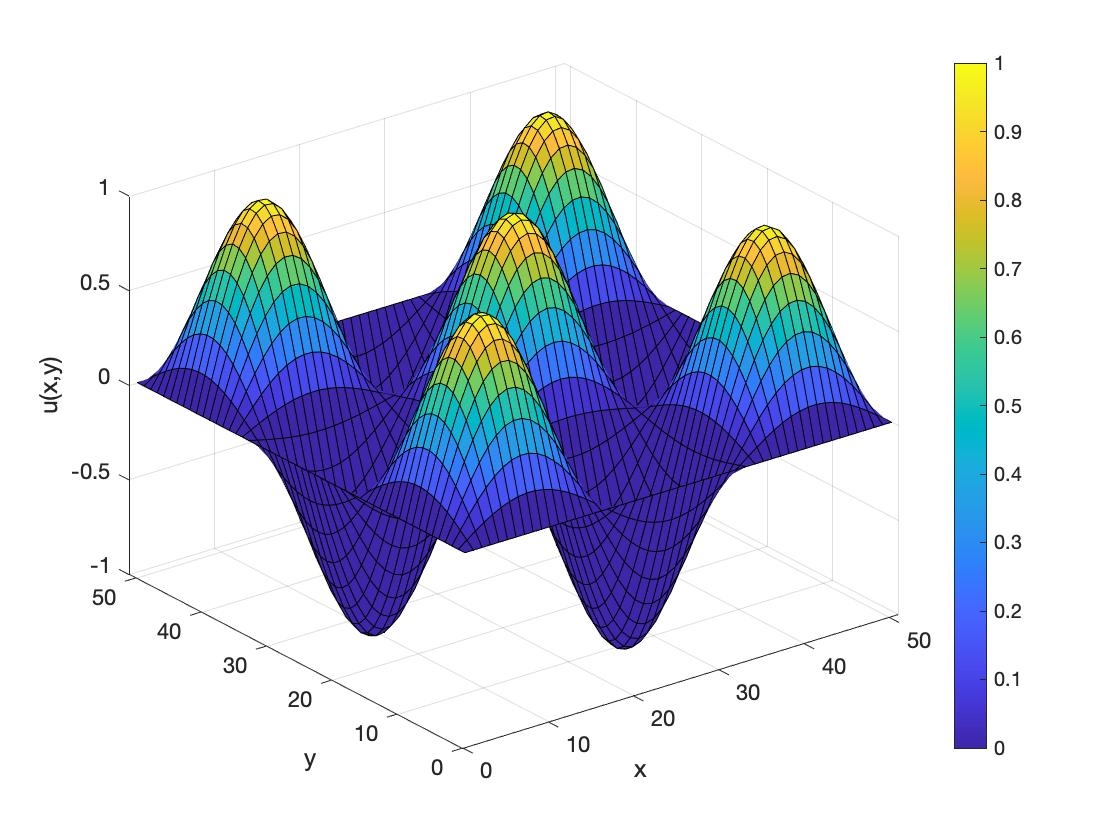}
\caption{True initial value $u_0$}
\end{subfigure}
\begin{subfigure}[b]{0.4\linewidth}
\includegraphics[width=\linewidth]{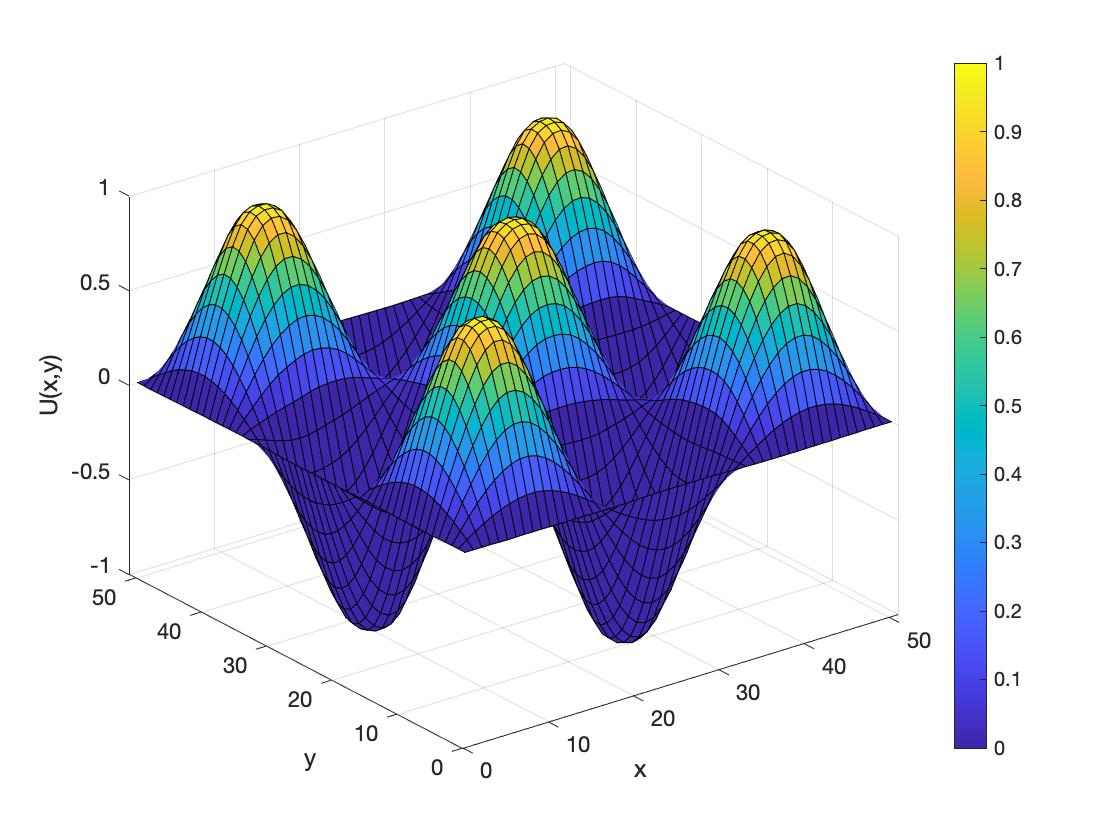}
\caption{Reconstructed $u_0$}
\end{subfigure}\\
\begin{subfigure}[b]{0.4\linewidth}
\includegraphics[width=\linewidth]{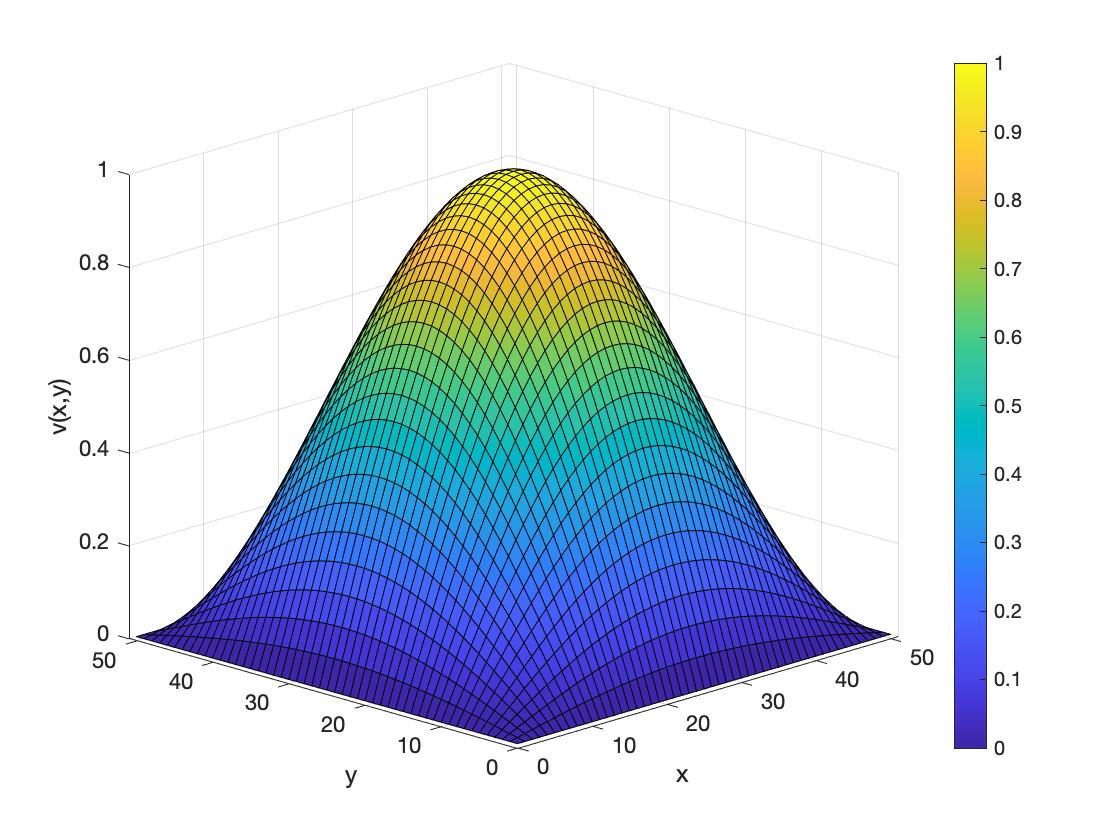}
\caption{True initial value $v_0$}
\end{subfigure}
\begin{subfigure}[b]{0.4\linewidth}
\includegraphics[width=\linewidth]{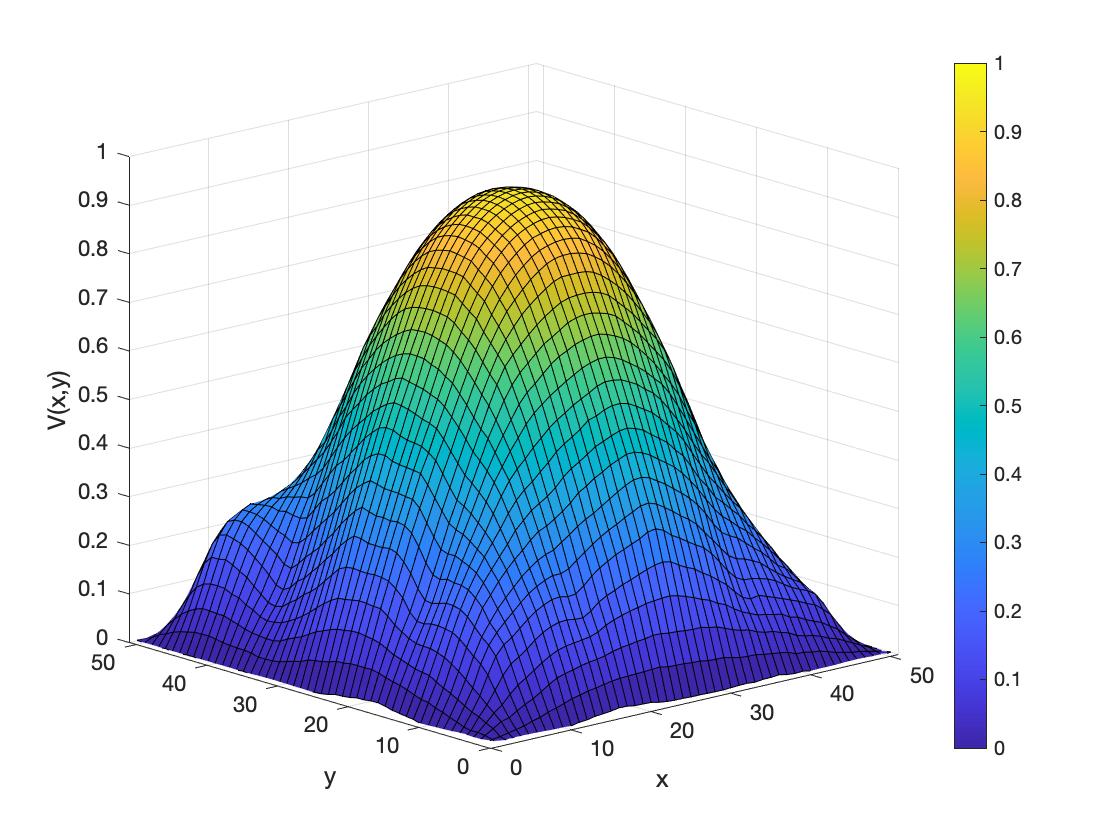}
\caption{Reconstructed $v_0$}
\end{subfigure}
\caption{True and reconstructed solutions for $\alpha=0.4$ and $T=1$ with $10\%$ relative-noise measurement, where $u_0(x,y)=\sin(3\pi x)\sin(3\pi y)$ and $v_0(x,y)=2^8 x^2(1-x)^2y^2(1-y)^2$.}
\label{Fig:xy_1minusxy}
\end{figure}

To validate the generalization capabilities of the trained neural network, we display in Figure \ref{Fig:xy_1minusxy} the reconstructed result under $10\%$ relative-noise measurement for the initial value $u_0(x,y) = \sin(3\pi x)\sin(3\pi y)$ and $v_0(x,y)=2^8 x^2(1-x)^2y^2(1-y)^2$, where the latter $v_0$ is not a part of the original training set. Similar to the 1-D case, the trained neural network exhibits its generalization property and provides a reasonable reconstruction of the initial value.
\begin{figure}[htbp]\centering
\includegraphics[width=0.5\linewidth]{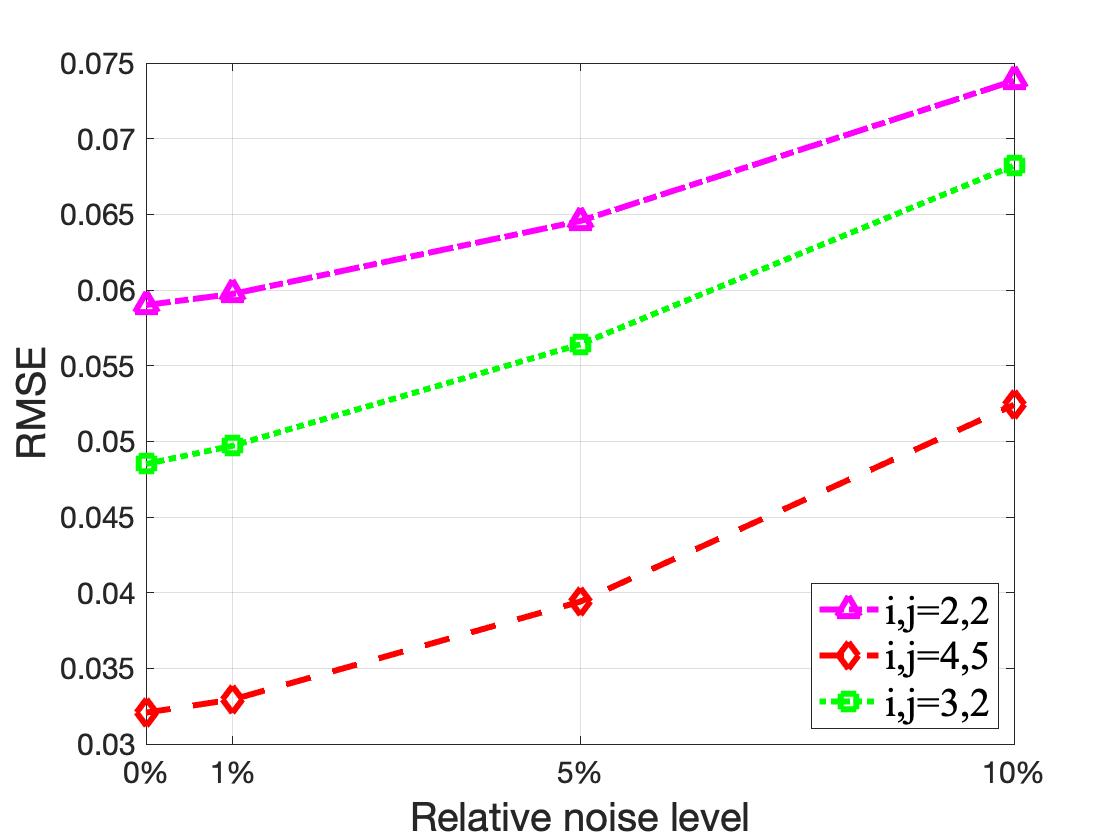}
\caption{Relative RMSE with respect to different noise levels for three tests.}\label{Fig:relative}
\end{figure}
Finally,  by including several numerical experiments, we present in Figure \ref{Fig:relative} the average relative root mean square error (RMSE) in some 2D cases across different noise levels. As the measurement noise escalates, a corresponding increase in the relative error is observed. This trend further corroborates the Lipschitz stability established in our Theorem \ref{thm-BP}, indicating a degree of sensitivity to noise propagation.

\appendix
\section{Eigensystem and Fractional Powers of $\cA$}\label{sec-app}

This appendix is devoted to the verification of several important facts about the operator $\cA$ introduced in Section \ref{sec-main}, namely, the existence of an eigensystem as well as its fractional powers $\cA^\ga$ for $\ga\in(0,1)$.\medskip

First we investigate the existence of an eigensystem $\{(\la_n,\bm\vp_n)\}$ of $\cA$ satisfying the properties stated in Section \ref{sec-main}. Following an orthodox strategy, we shall verify that
\begin{enumerate}
\item The bilinear form corresponding with $\cA$ satisfies the assumptions in the Lax-Milgram theorem, which guarantees the existence of $\cA^{-1}$.
\item The operator $\cA^{-1}:(L^2(\Om))^K\longrightarrow(L^2(\Om))^K$ is compact, symmetric and positive definite.
\end{enumerate}
To this end, we discuss the cases of \eqref{eq-IBVP1} and \eqref{eq-IBVP2} separately.\medskip

\noindent{\bf Case of \eqref{eq-IBVP1} } First of all, the bilinear form $B[\,\cdot\,,\,\cdot\,]:(H_0^1(\Om))^K\times(H_0^1(\Om))^K\longrightarrow\BR$ corresponding with $\cA$ is obviously
\[
B[\bm f,\bm g]=\int_\Om\left(\sum_{k=1}^K\bm A_k\nb f_k\cdot\nb g_k+\bm C\bm f\cdot\bm g\right)\rd\bm x
\]
for $\bm f=(f_1,\dots,f_K)^\T,\bm g=(g_1,\dots,g_K)^\T\in(H_0^1(\Om))^K$. Recall that for $\bm f\in(L^2(\Om))^K$, we defined
\[
\|\bm f\|_{L^2(\Om)}=(\bm f,\bm f)^{1/2}=\left(\sum_{k=1}^K\|f_k\|_{L^2(\Om)}^2\right)^{1/2}.
\]
Then for $\bm f\in(H_0^1(\Om))^K$, we have
\[
\|\bm f\|_{H^1(\Om)}=\left(\|\bm f\|_{L^2(\Om)}^2+\|\nb\bm f\|_{L^2(\Om)}^2\right)^{1/2}=\left(\|\bm f\|_{L^2(\Om)}^2+\sum_{k=1}^K\|\nb f_k\|_{L^2(\Om)}^2\right)^{1/2}.
\]
For a matrix $\bm P\in\BR^{d\times d}$, denote its matrix $2$-norm by $\|\bm P\|_2$, that is, $|\bm P\bm\xi|\le\|\bm P\|_2|\bm\xi|$ for any $\bm\xi\in\BR^d$. Then for the matrix-valued functions $\bm A_k\in C^1(\ov\Om;\BR^{d\times d})$ and $\bm C\in L^\infty(\Om;\BR^{d\times d})$, we define
\[
\|\bm A_k\|_{C(\ov\Om;\BR^{d\times d})}:=\max_{\bm x\in\ov\Om}\|\bm A_k(\bm x)\|_2,\quad\|\bm C\|_{L^\infty(\Om;\BR^{K\times K})}:=\mathop{\mathrm{ess}\sup}_{\bm x\in\Om}\|\bm C(\bm x)\|_2,
\]
which are abbreviated as $\|\bm A_k\|_{C(\ov\Om)}$ and $\|\bm C\|_{L^\infty(\Om)}$ respectively for simplicity.

For arbitrary $\bm f,\bm g\in(H_0^1(\Om))^K$, now we utilize the triangle inequality and the Cauchy-Schwarz inequality in $\BR^d$ to estimate $B[\bm f,\bm g]$ from above as
\begin{align*}
B[\bm f,\bm g] & \le\int_\Om\left(|\bm C\bm f\cdot\bm g|+\sum_{k=1}^K|\bm A_k\nb f_k\cdot\nb g_k|\right)\rd\bm x\\
& \le\int_\Om\left(\|\bm C(\bm x)\|_2|\bm f||\bm g|+\sum_{k=1}^K\|\bm A_k(\bm x)\|_2|\nb f_k||\nb g_k|\right)\rd\bm x\\
& \le\int_\Om\left(\|\bm C\|_{L^\infty(\Om)}|\bm f||\bm g|+\sum_{k=1}^K\|\bm A_k\|_{C(\ov\Om)}|\nb f_k||\nb g_k|\right)\rd\bm x.
\end{align*}
Setting $M:=\max\{\|\bm C\|_{L^\infty(\Om)},\|\bm A_1\|_{C(\ov\Om)},\dots,\|\bm A_K\|_{C(\ov\Om)}\}$, again we employ the Cauchy-Schwarz inequalities in $\BR^{K+1}$ and $L^2(\Om)$ repeatedly to deduce
\begin{align*}
B[\bm f,\bm g] & \le M\int_\Om\left(|\bm f||\bm g|+\sum_{k=1}^K|\nb f_k||\nb g_k|\right)\rd\bm x\\
& \le M\int_\Om\left(|\bm f|^2+\sum_{k=1}^K|\nb f_k|^2\right)^{1/2}\left(|\bm g|^2+\sum_{k=1}^K|\nb g_k|^2\right)^{1/2}\,\rd\bm x\\
& \le M\|\bm f\|_{H^1(\Om)}\|\bm g\|_{H^1(\Om)}.
\end{align*}
On the other hand, according to the Poincar\'e inequality, there exists a constant $C_\Om>0$ depending only on $\Om$ such that
\[
\|\nb h\|_{L^2(\Om)}^2\ge C_\Om\|h\|_{H^1(\Om)}^2,\quad\forall\,h\in H_0^1(\Om).
\]
Then for arbitrary $\bm f\in(H_0^1(\Om))^K$, the definiteness assumptions \eqref{eq-cond-Ak}--\eqref{eq-cond-C} on $\bm A_k$ and $\bm C$ imply
\begin{align}
B[\bm f,\bm f] & =\int_\Om\left(\sum_{k=1}^K\bm A_k\nb f_k\cdot\nb f_k+\bm C\bm f\cdot\bm f\right)\rd\bm x\ge\ka\int_\Om\sum_{k=1}^K|\nb f_k|^2\,\rd\bm x\nonumber\\
& =\ka\sum_{k=1}^K\|\nb f_k\|_{L^2(\Om)}^2\ge\ka\,C_\Om\sum_{k=1}^K\|f_k\|_{H^1(\Om)}^2=\ka\,C_\Om\|\bm f\|_{H^1(\Om)}^2.\label{eq-Best}
\end{align}
Then $B[\,\cdot\,,\,\cdot\,]$ satisfies the assumptions in the Lax-Milgram theorem. Therefore, for any $\bm f\in(L^2(\Om))^K$, there exists a unique $\bm u\in(H_0^1(\Om))^K$ such that
\begin{equation}\label{eq-LM}
B[\bm u,\bm v]=(\bm f,\bm v),\quad\forall\,\bm v\in(H_0^1(\Om))^K,
\end{equation}
which indicates the existence of $\cA^{-1}:(L^2(\Om))^K\longrightarrow(H_0^1(\Om))^K$ such that $\bm u=\cA^{-1}\bm f$.

Second, we show the compactness of $\cA^{-1}$ as an operator from $(L^2(\Om))^K$ to $(L^2(\Om))^K$. Taking $\bm v=\bm u=\cA^{-1}\bm f$ in \eqref{eq-LM}, we use the lower estimate \eqref{eq-Best} to see
\[
\ka\,C_\Om\|\bm u\|_{H^1(\Om)}^2\le B[\bm u,\bm u]=(\bm f,\bm u)\le\|\bm f\|_{L^2(\Om)}\|\bm u\|_{L^2(\Om)}\le\|\bm f\|_{L^2(\Om)}\|\bm u\|_{H^1(\Om)}
\]
and hence
\[
\|\cA^{-1}\bm f\|_{H^1(\Om)}=\|\bm u\|_{H^1(\Om)}\le\f{\|\bm f\|_{L^2(\Om)}}{\ka\,C_\Om}.
\]
Then $\cA^{-1}$ is bounded as an operator from $(L^2(\Om))^K$ to $(H_0^1(\Om))^K$. On the other hand, it follows from the Rellich-Kondrachov compactness theorem that the embedding $H_0^1(\Om)\subset\subset L^2(\Om)$ is compact. Then the identity operator $I:(H_0^1(\Om))^K\longrightarrow(L^2(\Om))^K$ is also compact. Since the composite of a compact operator and a bounded operator is compact, we arrive at the compactness of $\cA^{-1}:(L^2(\Om))^K\longrightarrow(L^2(\Om))^K$.

Third, we examine the symmetry and positive definiteness of $\cA^{-1}$. Due to the symmetry of $\bm A_k$ and $\bm C$, the symmetry $B[\bm u,\bm v]=B[\bm v,\bm u]$ is obvious. Then for any $\bm f,\bm g\in(L^2(\Om))^K$, setting $\bm u=\cA^{-1}\bm f$ and $\bm v=\cA^{-1}\bm g$ in \eqref{eq-LM} yields
\[
(\cA^{-1}\bm f,\bm g)=(\bm u,\bm g)=(\bm g,\bm u)=B[\bm v,\bm u]=B[\bm u,\bm v]=(\bm f,\bm v)=(\bm f,\cA^{-1}\bm g)
\]
or the symmetry of $\cA^{-1}$. Meanwhile, the positivity
\[
(\cA^{-1}\bm f,\bm f)=B[\cA^{-1}\bm f,\cA^{-1}\bm f]\ge0,\quad\forall\,\bm f\in(L^2(\Om))^K
\]
also follows immediately from \eqref{eq-Best}.

Consequently, the general eigenvalue theory for compact operators asserts that all eigenvalues of $\cA^{-1}:(L^2(\Om))^K\longrightarrow(L^2(\Om))^K$ are positive and the corresponding eigenfunctions make up a complete orthonormal system of $(L^2(\Om))^K$.
Then we obtained all the desired properties of the eigensystem claimed  in Section \ref{sec-main} in the case of \eqref{eq-IBVP1}.\medskip

\noindent{\bf Case of \eqref{eq-IBVP2} } Similarly, the bilinear form $B'[\,\cdot\,,\,\cdot\,]:(H_0^1(\Om))^K\times(H_0^1(\Om))^K\longrightarrow\BR$ corresponding with $\cA$ in this case becomes
\[
B'[\bm f,\bm g]=\int_\Om\left(\sum_{i,j,k,\ell=1}^d a_{ijk\ell}(\pa_j f_i)\pa_\ell g_k+\bm C\bm f\cdot\bm g\right)\rd\bm x,\quad\bm f,\bm g\in(H_0^1(\Om))^K.
\]
With slight abuse of notation, we denote the Frobenius inner product of two matrices $\bm P=(p_{ij}),\bm Q=(q_{ij})\in\BR^{d\times d}$ as $\bm P\cdot\bm Q:=\sum_{i,j=1}^d p_{ij}q_{ij}$, and the corresponding Frobenius norm as $|\bm P|=(\bm P\cdot\bm P)^{1/2}$. Regarding a fourth order tensor $\BB\in\BR^{d\times d\times d\times d}$ as a linear operator from $\BR^{d\times d}$ to $\BR^{d\times d}$, we denote its operator norm as $\|\BB\|_\op:=\max_{|\bm P|=1}|\BB\bm P|$, so that we have $|\BB\bm P|\le\|\BB\|_\op|\bm P|$ for any $\bm P\in\BR^{d\times d}$. For the tensor-valued function $\BA\in C^1(\ov\Om;\BR^{d\times d\times d\times d})$ in \eqref{eq-IBVP2}, as before we define $\|\BA\|_{C(\ov\Om)}:=\max_{\bm x\in\ov\Om}\|\BA(\bm x)\|_\op$.

Now for $\bm f,\bm g\in(H_0^1(\Om))^K$, we can simplify
\[
B'[\bm f,\bm g]=\int_\Om(\BA\nb\bm f\cdot\nb\bm g+\bm C\bm f\cdot\bm g)\,\rd\bm x
\]
and provide an upper estimate as
\begin{align*}
|B'[\bm f,\bm g]| & \le\int_\Om(|\BA\nb\bm f\cdot\nb\bm g|+|\bm C\bm f\cdot\bm g|)\,\rd\bm x\le\int_\Om(\|\BA(\bm x)\|_\op|\nb\bm f||\nb\bm g|+\|\bm C(\bm x)\|_2|\bm f||\bm g|)\,\rd\bm x\\
& \le\int_\Om\left(\|\BA\|_{C(\ov\Om)}|\nb\bm f||\nb\bm g|+\|\bm C\|_{L^\infty(\Om)}|\bm f||\bm g|\right)\rd\bm x\le M'\int_\Om(|\nb\bm f||\nb\bm g|+|\bm f||\bm g|)\,\rd\bm x\\
& \le M'\int_\Om\left(|\bm f|^2+|\nb\bm f|^2\right)^{1/2}\left(|\bm g|^2+|\nb\bm g|^2\right)^{1/2}\,\rd\bm x
\le M'\|\bm f\|_{H^1(\Om)}\|\bm g\|_{H^1(\Om)},
\end{align*}
where we put $M':=\max\{\|\BA\|_{C(\ov\Om)},\|\bm C\|_{L^\infty(\Om)}\}$.

On the other hand, for any $\bm f\in(H_0^1(\Om))^K$, the non-negative definiteness of $\bm C$ implies
\[
B'[\bm f,\bm f]\ge\int_\Om\BA\nb\bm f\cdot\nb\bm f\,\rd\bm x.
\]
Thanks to the full symmetry assumption \eqref{eq-fullsym} of $\BA$, it is not difficult to verify that
\[
\BA\nb\bm f\cdot\nb\bm f=\BA\f{\nb\bm f+(\nb\bm f)^\T}2\cdot\f{\nb\bm f+(\nb\bm f)^\T}2\quad\mbox{a.e.\! in }\Om.
\]
Now that $(\nb\bm f+(\nb\bm f)^\T)/2$ is a symmetric matrix in $\Om$, we can take advantage of the stability condition \eqref{eq-stabcond} of $\BA$ to estimate $B'[\bm f,\bm f]$ from below as
\begin{align}
B'[\bm f,\bm f] & \ge\int_\Om\BA\f{\nb\bm f+(\nb\bm f)^\T}2\cdot\f{\nb\bm f+(\nb\bm f)^\T}2\,\rd\bm x\nonumber\\
& \ge\ka\int_\Om\left|\f{\nb\bm f+(\nb\bm f)^\T}2\right|^2\rd\bm x\ge\ka\,C_\Om'\|\bm f\|_{H^1(\Om)}^2,\label{eq-B'est}
\end{align}
where we employed Korn's inequality (e.g. \cite[Theorem 5.7]{LQ12}) with a constant $C_\Om'>0$ depending only on $\Om$ in the last inequality.

The remaining part of the verification is identically the same as that in the case of \eqref{eq-IBVP1}, which is omitted here.\medskip

Next, we discuss the existence of fractional powers $\cA^\ga$ for $\ga\in(0,1)$ as well as their domain \eqref{eq-domain-fp} and representation \eqref{eq-fracpow} by means of the eigensystem of $\cA$.

Following the standard theory on fractional powers of operators, first we demonstrate that $\cA$ is an m-accretive operator in $(L^2(\Om))^K$ (see e.g. \cite[\S2.1]{T79}). To begin with, we know that $D(\cA)=(H^2(\Om)\cap H_0^1(\Om))^K$ is dense in $(L^2(\Om))^K$. Next, for any $\bm f\in D(\cA)$, it follows immediately from the integration by parts and \eqref{eq-Best}--\eqref{eq-B'est} that
\[
(\cA\bm f,\bm f)=\left.\begin{cases}
B[\bm f,\bm f], & \mbox{Case of \eqref{eq-IBVP1}}\\
B'[\bm f,\bm f], & \mbox{Case of \eqref{eq-IBVP2}}
\end{cases}\right\}\ge0.
\]
Now it remains to check that for any $\la>0$, the range $R(\la I+\cA)=(L^2(\Om))^K$, that is, for any $\bm f\in D(\cA)$ and $\la>0$, there exists $\bm u\in D(\cA)$ such that $(\la I+\cA)\bm u=\bm f$. Employing the eigensystem of $\cA$, we readily see that
\begin{equation}\label{eq-resolvent}
\bm u=(\la I+\cA)^{-1}\bm f=\sum_{n=1}^\infty\f{(\bm f,\bm\vp_n)}{\la+\la_n}\bm\vp_n\in D(\cA)
\end{equation}
is the unique candidate. Then $\cA$ is indeed an m-accretive operator in $(L^2(\Om))^K$.

Now according to \cite[\S2.6, Theorem 6.9]{P83}, $\cA$ admits fractional powers $\cA^\ga$ for any $\ga\in(0,1)$ such that
\begin{equation}\label{eq-def-fp}
\cA^\ga\bm f=\frac{\sin\pi\ga}\pi\int_0^\infty\la^{\ga-1}\cA(\la I+\cA)^{-1}\bm f\,\rd\la,\quad\bm f\in D(\cA).
\end{equation}
We remark that $\cA^\ga:D(\cA)\longrightarrow(L^2(\Om))^K$ is a bounded operator. In fact, it follows from \cite[\S2.6, Theorem 6.10]{P83} that there exists a constant $C_0>0$ such that
\[
\|\cA^\ga\bm f\|_{L^2(\Om)}\le C_0\|\bm f\|_{L^2(\Om)}^{1-\ga}\|\cA\bm f\|_{L^2(\Om)}^\ga,\quad\bm f\in D(\cA).
\]
Since $\cA\bm f$ involves the derivatives of $\bm f$ up to the second order, there exists a constant $C_\cA>0$ depending only on the coefficients of $\cA$ such that $\|\cA\bm f\|_{L^2(\Om)}\le C_\cA\|\bm f\|_{D(\cA)}$. Together with the fact that $\|\bm f\|_{L^2(\Om)}\le\|\bm f\|_{D(\cA)}$, we obtain
\[
\|\cA^\ga\bm f\|_{L^2(\Om)}\le C_0\|\bm f\|_{D(\cA)}\left(C_\cA\|\bm f\|_{D(\cA)}\right)^{1-\ga}=C_0C_\cA^{1-\ga}\|\bm f\|_{D(\cA)}.
\]

In order to show that $\cA^\ga\bm f$ takes the form of \eqref{eq-fracpow}, for any $\bm f\in D(\cA)$ we set
\[
\bm f_N:=\sum_{n=1}^N(\bm f,\bm\vp_n)\bm\vp_n,\quad N=1,2,\dots.
\]
Then $\bm f_N\longrightarrow\bm f$ in $D(\cA)$ as $N\to\infty$. Substituting $\bm f_N$ into \eqref{eq-def-fp} and using the representation \eqref{eq-resolvent}, we calculate
\begin{align}
\cA^\ga\bm f_N & =\f{\sin\pi\ga}\pi\int_0^\infty\la^{\ga-1}\cA\left(\sum_{n=1}^N\f{(\bm f,\bm\vp_n)}{\la+\la_n}\bm\vp_n\right)\rd\la\nonumber\\
& =\f{\sin\pi\ga}\pi\int_0^\infty\la^{\ga-1}\sum_{n=1}^N\f{\la_n(\bm f,\bm\vp_n)}{\la+\la_n}\bm\vp_n\,\rd\la=\f{\sin\pi\ga}\pi\sum_{n=1}^N I_n\la_n(\bm f,\bm\vp_n)\bm\vp_n,\label{eq-comput}
\end{align}
where
\[
I_n:=\int_0^\infty\f{\la^{\ga-1}}{\la+\la_n}\,\rd\la,\quad n=1,\dots,N.
\]
We perform two changes of variables $\la=\la_n\eta$ and $\xi=(\eta+1)^{-1}$ sequentially to calculate
\[
I_n
=\la_n^{\ga-1}\int_0^\infty\f{\eta^{\ga-1}}{\eta+1}\,\rd\eta
=\la_n^{\ga-1}\int_0^1\xi^{-\ga}(1-\xi)^{\ga-1}\,\rd\xi
=\la_n^{\ga-1}\Ga(\ga)\Ga(1-\ga)=\f{\la_n^{\ga-1}\pi}{\sin\pi\ga},
\]
where
we utilized Euler's reflection formula in the last equality. Hence, plugging the above identity into \eqref{eq-comput} yields
\[
\cA^\ga\bm f_N=\f{\sin\pi\ga}\pi\sum_{n=1}^N\f{\la_n^{\ga-1}\pi}{\sin\pi\ga}\la_n(\bm f,\bm\vp_n)\bm\vp_n=\sum_{n=1}^N\la_n^\ga(\bm f,\bm\vp_n)\bm\vp_n.
\]

We claim that $\{\cA^\ga\bm f_N\}$ is a Cauchy sequence in $(L^2(\Om))^K$, so that there exists the limit
\begin{equation}\label{eq-limit}
\lim_{N\to\infty}\cA^\ga\bm f_N=\lim_{N\to\infty}\sum_{n=1}^N\la_n^\ga(\bm f,\bm\vp_n)\bm\vp_n=\sum_{n=1}^\infty\la_n^\ga(\bm f,\bm\vp_n)\bm\vp_n\quad\mbox{in }(L^2(\Om))^K.
\end{equation}
To see this, we take sufficiently large $N,N'\in\BN$ such that $N'>N$ and $\la_n\ge1$ for all $n\ge N$ without loss of generality. Then $\la_n^\ga \leq\la_n$ for all $n\ge N$ and owing to the orthogonality of $\{\bm\vp_n\}$, we have
\begin{align*}
\|\cA^\ga\bm f_{N'}-\cA^\ga\bm f_N\|_{L^2(\Om)} & =\left\|\sum_{n=N+1}^{N'}\la_n^\ga(\bm f,\bm\vp_n)\bm\vp_n\right\|_{L^2(\Om)}=\left(\sum_{n=N+1}^{N'}|\la_n^\ga(\bm f,\bm\vp_n)|^2\right)^{1/2}\\
& \le\left(\sum_{n=N+1}^{N'}|\la_n(\bm f,\bm\vp_n)|^2\right)^{1/2}=\|\cA\bm f_{N'}-\cA\bm f_N\|_{L^2(\Om)}\\
& \le C_\cA\|\bm f_{N'}-\bm f_N\|_{D(\cA)}\longrightarrow0\quad(N,N'\to\infty).
\end{align*}
Combining \eqref{eq-limit} with the boundedness of  $\cA^\ga:D(\cA)\longrightarrow(L^2(\Om))^K$, we arrive at
\[
\cA^\ga\bm f=\cA\left(\lim_{N\to\infty}\bm f_N\right)=\lim_{N\to\infty}\cA^\ga\bm f_N=\sum_{n=1}^\infty\la_n^\ga(\bm f,\bm\vp_n)\bm\vp_n\in(L^2(\Om))^K
\]
for any $\bm f\in D(\cA)$. Finally, since the above equation holds for any $\bm f\in(L^2(\Om))^K$ as long as the series on the right-hand side converges in $(L^2(\Om))^K$, we can extend the domain of $\cA^\ga$ to $D(\cA^\ga)$ defined in \eqref{eq-domain-fp}. These validate the operator structure of $\cA^\ga$ described in Section \ref{sec-main}.\bigskip

{\bf Acknowledgements } The first and third authors are supported by Key-Area Research and Development Program of Guangdong Province (No.2021B0101190003), NSFC (No.11925104), and Science and Technology Commission of Shanghai Municipality (23JC1400501). The second author is supported by JSPS KAKENHI Grant Numbers JP22K13954 and JP23KK0049.



\begin{thebibliography}{99}

\bibitem{AM2019}
S. Arridge, P. Maass, O. \"Oktem and C. Sch\"onlieb, Solving inverse problems using data-driven models, {\it Acta Numer.}, {\bf28}, 2019, 1--174.

\bibitem{BDES18}
T. S. Brown, S. Du, H. Eruslu and F. J. Sayas, Analysis of models for viscoelastic wave propagation, {\it Appl. Math. Nonlinear Sci.}, {\bf3}, 2018, 55--96.

\bibitem{CMY24}
S. E. Chorfi, L. Maniar and M. Yamamoto, The backward problem for time-fractional evolution equations, {\it Appl. Anal.}, {\bf103}, 2024, 2194--2212.
 
\bibitem{dL2022}
M. V. de Hoop, M. Lassas and C. A. Wong, Deep learning architectures for nonlinear operator functions and nonlinear inverse problems, {\it Mathematical Stat. Learn.}, {\bf4}, 2022, 1--86.

\bibitem{dK2023}
M. V. de Hoop, N. B. Kovachki, N. H. Nelsen and A. M. Stuart, Convergence rates for learning linear operators from noisy data, {\it SIAM/ASA J. Uncertain. Quantif.}, {\bf11}, 2023, 480--513.

\bibitem{D2016}
T. Dozat, Incorporating Nesterov Momentum into Adam, Proceedings of the 4th International Conference on Learning Representations, 2016, 1--4.

\bibitem{B18}
W. E and B. Yu, The deep Ritz method: a deep learning-based numerical algorithm for solving variational problems, {\it Commum. Math. Stat.}, {\bf6}, 2018, 1--12.

\bibitem{E10}
L.C. Evans, Partial Differential Equations, second edition, Amer. Math. Soc., Providence, RI, 2010.

\bibitem{FLY20}
G. Floridia, Z. Li and M. Yamamoto, Well-posedness for the backward problems in time for general time-fractional diffusion equation, {\it Atti Accad. Naz. Lincei Rend. Lincei Mat. Appl.}, {\bf31}, 2020, 593--610.

\bibitem{FY20}
G. Floridia and M. Yamamoto, Backward problems in time for fractional diffusion-wave equation, {\it Inverse Probl.}, {\bf36}, 2020, 125016 (14pp).


\bibitem{GCR}
M. Ginoa, S. Cerbelli and H. E. Roman, Fractional diffusion equation and relaxation in complex viscoelastic materials, {\it Physica A}, {\bf 191}, 1992, 449--453.

\bibitem{GW22}
L. Guo, H. Wu, X. Yu and T. Zhou, Monte Carlo fPINNs: Deep learning method for forward and inverse problems involving high dimensional fractional partial differential equations, {\it Comput. Method. Appl. M.}, {\bf 400}, 2022, 115523.

\bibitem{HH}
Y. Hatano and N. Hatano, Dispersive transport of ions in column experiments: An explanation of long-tailed profiles, {\it Water Resource Research}, {\bf 34}, 1998, 1027--1033.

\bibitem{I06}
V. Isakov, Inverse Problems for Partial Differential Equations, Springer-Verlag, New York, 2006.

\bibitem{JZ23}
B. Jin and Z. Zhou, Numerical Treatment and Analysis of Time-Fractional Evolution Equations, Springer, Cham, 2023.

\bibitem{KR22}
B. Kaltenbacher and W. Rundell, Determining damping terms in fractional wave equations, {\it Inverse Probl.}, {\bf38}, 2022, 075004 (35pp).

\bibitem{KR23}
B. Kaltenbacher and W. Rundell, Inverse Problems for Fractional Partial Differential Equations, Amer. Math. Soc., Providence, RI, 2023.

\bibitem{KL21}
Y. Khoo, J. Lu and L. Ying, Solving parametric PDE problems with artificial neural networks, {\it Eur. J. Appl. Math.}, {\bf32}, 2021, 421--435.

\bibitem{KRY20}
A. Kubica, K. Ryszewska and M. Yamamoto, Time-Fractional Differential Equations: A Theoretical Introduction, Springer-Verlag, Tokyo, 2020.

\bibitem{LQ12}
T. Li and T. Qin, Physics and Partial Differential Equations, Volume I, SIAM, Philadelphia, 2012.

\bibitem{LL20}
Y. Li, J. Lu and A. Mao, Variational training of neural network approximations of solution maps for physical models, {\it J. Comput. Phys.}, 2020, 109338.

\bibitem{LHL23}
Z. Li, X. Huang and Y. Liu, Initial-boundary value problems for coupled systems of time-fractional diffusion equations, {\it Fract. Calc. Appl. Anal.}, {\bf26}, 2023, 533--566.

\bibitem{LY19}
Z. Li and M. Yamamoto, Inverse problems of determining coefficients of the fractional partial differential equations, {\it Handbook of Fractional Calculus with Applications}. Volume 2: Fractional Differential Equations, De Gruyter, Berlin, 2019, 431--442.

\bibitem{LX2007}
Y. Lin and C. Xu, Finite difference/spectral approximations for the time-fractional diffusion equation, {\it J. Comput. Phys.}, {\bf225}, 2007, 1533--1552.

\bibitem{LY10}
J. J. Liu and M. Yamamoto, A backward problem for the time-fractional diffusion equation, {\it Appl. Anal.}, {\bf89}, 2010, 1769--1788.

\bibitem{LY23}
Y. Liu and M. Yamamoto, Uniqueness of orders and parameters in multi-term time-fractional diffusion equations by short-time behavior, {\it Inverse Probl.}, {\bf39}, 2023, 024003 (28pp).

\bibitem{LL18}
Z. Long, Y. Lu, X. Ma and B. Dong, PDE-Net: Learning PDEs from Data. {\it Proc. Mach. Learn. Res.}, {\bf80}, 2018, 3208--3216.

\bibitem{PL19}
G. Pang, L. Lu and G. E. Karniadakis, fPINNs: Fractional physics-informed neural networks, {\it SIAM J. Sci. Comput.}, {\bf41}, 2019, A2603--A2626.

\bibitem{P83}
A. Pazy, Semigroups of Linear Operators and Applications to Partial Differential Equations, Springer, Berlin, 1983.

\bibitem{P99}
I. Podlubny, Fractional Differential Equations, Academic Press, San Diego, 1999.

\bibitem{RP19}
M. Raissi, P. Perdikaris and G. E. Karniadakis, Physics-informed neural networks: A deep learning framework for solving forward and inverse problems involving nonlinear partial differential equations, {\it J. Comput. Phys.}, {\bf378}, 2019, 686--707.

\bibitem{RHY21}
C. Ren, X. Huang and M. Yamamoto, Conditional stability for an inverse coefficient problem of a weakly coupled time-fractional diffusion system with half order by Carleman estimate, {\it J. Inverse Ill-Posed Probl.}, {\bf29}, 2021, 635--651.

\bibitem{RXL14}
C. Ren, X. Xu and S. Lu, Regularization by projection for a backward problem of the time-fractional diffusion equation, {\it J. Inverse Ill-Posed Probl.} {\bf22}, 2014, 121--139.

\bibitem{SY11}
K. Sakamoto and M. Yamamoto, Initial value/boundary value problems for  fractional diffusion-wave equations and applications to some inverse problems, {\it J. Math. Anal. Appl.}, {\bf382}, 2011, 426--447.

\bibitem{SS18}
J. Sirignano and K. Spiliopoulos, DGM: a deep learning algorithm for solving partial differential equations, {\it J. Comput. Phys.}, {\bf375}, 2018, 1339--1364.

\bibitem{T79}
H. Tanabe, Equations of Evolution, Pitman, London, 1979.

\bibitem{ZB20}
Y. Zang, G. Bao, X. Ye and H. Zhou, Weak adversarial networks for high-dimensional partial differential equations, {\it J. Comput. Phys.}, {\bf411}, 2020, 109409.

\end{thebibliography}
\end{document}